\documentclass[graybox]{svmult}
\usepackage{amsmath}
\usepackage{eucal}
\usepackage[active]{srcltx}
\usepackage{amssymb}
 \usepackage[usenames,dvipsnames]{pstricks}
 \usepackage{enumerate}
 \usepackage{epsfig}
\usepackage[ansinew]{inputenc} 

\renewcommand{\>}{\rangle}
\newcommand{\p}{\partial}
\newcommand{\pfrac}[2]{\genfrac{}{}{}{1}{#1}{#2}}

\newcommand{\at}[2]{\genfrac{}{}{0pt}{}{#1}{#2}}

\newcommand{\bb}[1]{{\mathbb #1}}
\newcommand{\mc}[1]{{\mathcal #1}}

\usepackage{tikz}

\newcommand{\Y}{\mathcal{Y}}

\usepackage{helvet}         
\usepackage{courier}        

\usepackage{graphicx}        
\usepackage{multicol}        
\definecolor{columbiablue}{rgb}{0.61, 0.87, 1.0}

\usepackage{tikz}
\usetikzlibrary{arrows,decorations.pathmorphing,backgrounds,positioning,fit,petri}
\usetikzlibrary{shapes}

\newcommand{\tclock}[5]{
\begin{pgflowlevelscope}{\pgftransformscale{#4}}
\begin{scope}[shift={(#1,#2)}]
\shadedraw [inner color=#3!7!gray, outer color=#3!90!black, 
    line width=0.2pt] (0,0) circle (0.5cm);
\foreach \x in {6,12,...,360} {\draw[line width=0.2pt] (\x:0.40cm) -- (\x:0.45cm);}
\foreach \y in {30,60,...,360} {\draw[line width=0.2pt] (\y:0.35cm) -- (\y:0.45cm);}
{\pgfsetarrowsstart{to}
\draw[line width=0.4pt] (0:0.29cm) -- (0.02,0);
\draw[line width=0.4pt]  (90:0.32cm)--(0,0.02);}
\filldraw[fill=black] (-0.055,0.55) rectangle (0.055,0.6);
\filldraw[fill=black] (-0.015,0.51) rectangle (0.015,0.55);
\draw [line width=0.2pt](0,0.61) circle (0.11cm);
\draw [line width=0.2pt](0,0) circle (0.5cm);
\draw [line width=0.2pt](0,0) circle (0.02cm);
\draw [white,thick,domain=30:45] plot ({#5*0.6*cos(\x)}, {#5*0.6*sin(\x)});
\draw [white,thick,domain=20:55] plot ({#5*0.65*cos(\x)}, {#5*0.65*sin(\x)});
\draw [white,thick,domain=10:65] plot ({#5*0.7*cos(\x)}, {#5*0.7*sin(\x)});
\draw [white,thick,domain=135:150] plot ({#5*0.6*cos(\x)}, {#5*0.6*sin(\x)});
\draw [white,thick,domain=125:160] plot ({#5*0.65*cos(\x)}, {#5*0.65*sin(\x)});
\draw [white,thick,domain=170:115] plot ({#5*0.7*cos(\x)}, {#5*0.7*sin(\x)});
\end{scope}
\end{pgflowlevelscope}
}

\begin{document}

\title*{Equilibrium fluctuations for the slow boundary exclusion process}
\author{Tertuliano Franco, Patr\' icia Gon\c calves and Adriana Neumann}
\institute{Tertuliano Franco \at UFBA,
 Instituto de Matem\'atica, Campus de Ondina, Av. Adhemar de Barros, S/N. CEP 40170-110,
Salvador, Brazil.
\\ \email{tertu@ufba.br}
\and Patrícia Gonçalves \at  Center for Mathematical Analysis,  Geometry and Dynamical Systems,
Instituto Superior T\'ecnico, Universidade de Lisboa,
Av. Rovisco Pais, 1049-001 Lisboa, Portugal
\\ \email{patricia.goncalves@math.tecnico.ulisboa.pt}
\and Adriana Neumann \at UFRGS, Instituto de Matem\'atica, Campus do Vale, Av. Bento Gon\c calves, 9500. CEP 91509-900, Porto Alegre, Brazil.
\\ \email{aneumann@mat.ufrgs.br}}

\maketitle

\abstract{We prove that the equilibrium fluctuations of the symmetric simple exclusion process in contact with slow boundaries is given by an Ornstein-Uhlenbeck process with Dirichlet, Robin or Neumann boundary conditions depending on the range of the parameter that  rules the slowness of the boundaries.}

\section{Introduction}

The study of nonequilibrium behavior of interacting particle systems is one of the most challenging problems in the field and it has only been completely solved in very particular cases. The toy model for the study of a system in a nonequilibrium scenario is the symmetric simple exclusion process (SSEP) whose dynamics is rather simple to explain and it already captures many features of more complicated systems.  

 The dynamics of this model can be described as follows. We fix a scaling parameter $n$ and we  consider the SSEP evolving on the discrete space $\Sigma_n=\{1,\cdots, n-1\}$ to which we call the bulk. To each pair of bonds $\{x,x+1\}$ with $x=1,\cdots, n-2$ we associate a Poisson process $N_{x,x+1}(t)$ of rate $1$. Now we artificially add two end points at the bulk, namely, we add  the sites $x=0$ and $x=n$ and we  superpose the exclusion dynamics with a Glauber dynamics which has only effect at the boundary points of the bulk, namely at the sites $x=1$ and $x=n-1$. For that purpose,  we add extra Poisson processes at the bonds $\{0,1\}$ and $\{n-1,n\}$.  In each one of these bonds there are two Poisson processes:  $N_{0,1}(t)$ with parameter  $\alpha n^{-\theta}$, $N_{1,0}(t)$ with parameter $(1-\alpha)n^{-\theta}$,   $N_{n-1,n}(t)$ with parameter  $\beta n^{-\theta}$ and  $N_{n,n-1}(t)$ with parameter $(1-\beta)n^{-\theta}$. All the Poisson  processes are independent. Above $\alpha, \beta\in(0,1)$ and $\theta\geq 0$ is a parameter that rules the slowness of the boundary dynamics.  Below in the figure we colored the Poisson clocks associated to the bonds in the bulk in the blue color, while the Poisson clocks associated to the bonds at the boundary are colored in the  gray and pink colors, to emphasize that  
they have different rates.

Now that the clocks are fixed we can explain the dynamics. For that purpose,  initially we place particles in the bulk according to some probability measure and we denote  this configuration of particles and holes by $\eta=(\eta(1),\cdots, \eta(n-1))$, so that for $x\in\Sigma_n$, $\eta(x)=1$ if there is a particle at the site $x$ and $\eta(x)=0$ if the site $x$ is empty. Now, if a clock rings for a bond  $\{x,x+1\}$ in the bulk, then we exchange the coordinates $x$ and $x+1$ of $\eta$, that is we exchange $\eta(x) $ with $\eta(x+1)$ at rate $1$. If the clock rings for the bond at the boundary as, for example, from the Poisson process $N_{0,1}(t)$ then a particle gets into the bulk through the site $1$ at rate $\alpha n^{-\theta}$ if and only if there is no particle at the site $1$, otherwise nothing happens. If the clock rings from the Poisson process $N_{1,0}(t)$ and there is a particle at the site $1$, then it exits the bulk from the site $n-1$ at rate $(1-\alpha)n^{-\theta}$. 
 Note that the higher the value of $\theta$ the slower is the dynamics at the boundaries. For a display of the description above, see the figure below.
 \begin{center}
 \begin{tikzpicture}[thick, scale=0.85][h!]
 \draw[latex-] (-6.5,0) -- (6.5,0) ;
\draw[-latex] (-6.5,0) -- (6.5,0) ;
\foreach \x in  {-6,-5,-4,-3,-2,-1,0,1,2,3,4,5,6}
\draw[shift={(\x,0)},color=black] (0pt,0pt) -- (0pt,-3pt) node[below] 
{};
 \node[fill=black!30!,shape=circle,draw=black,minimum size=0.7cm] (A) at (0.5,0.4) {};
       \node[shape=circle,minimum size=0.7cm] (N) at (-4.5,1.2) {};
       \node[shape=circle,minimum size=0.7cm] (P) at (-4.5,2.0) {};
    \node[fill=black!30!,shape=circle,draw=black,minimum size=0.7cm] (C) at (-3.5,0.4) {};
   \node[fill=black!30!,shape=circle,draw=black,minimum size=0.7cm] (E) at (3.5,0.4) {};
    \node[shape=circle,draw=black,minimum size=0.7cm] (E) at (3.5,0.4) {};
        \node[shape=circle,draw=black,minimum size=0.7cm] (L) at (-5.5,0.4) {};
         \node[fill=columbiablue,shape=circle,draw=black,minimum size=0.7cm] (L) at (-5.5,0.4) {};
          \node[shape=circle,draw=black,minimum size=0.7cm] (M) at (-5.5,1.2) {};
          \node[shape=circle,minimum size=0.7cm] (Q) at (-5.5,2.0) {};
           \node[fill=columbiablue,shape=circle,draw=black,minimum size=0.7cm] (M) at (-5.5,1.2) {};
           
             \node[shape=circle,draw=black,minimum size=0.7cm] (R) at (5.5,0.4) {};
          \node[shape=circle,minimum size=0.7cm] (S) at (5.5,1.2) {};
           \node[fill=columbiablue,shape=circle,draw=black,minimum size=0.7cm] (R) at (5.5,0.4) {};
            \node[fill=columbiablue,shape=circle,draw=black,minimum size=0.7cm] (S) at (5.5,1.2) {};
              \node[shape=circle,minimum size=0.7cm] (T) at (5.5,2.0) {};
               \node[shape=circle,minimum size=0.7cm] (U) at (4.5,3.6) {};
                 \node[shape=circle,minimum size=0.7cm] (V) at (5.5,3.0) {};
               
                  \node[shape=circle,minimum size=0.7cm] (W) at (4.5,3.0) {};
               
           \node[fill=columbiablue,shape=circle,draw=black,minimum size=0.7cm] (T) at (5.5,2.0) {};
              \node[shape=circle,minimum size=0.7cm] (U) at (4.5,2.0) {};
              \path [->] (T) edge[bend right =60] node[above] {$\frac{\beta}{n^\theta}$} (U);
              
                \path [->] (W) edge[bend left=60] node[above] {$\frac{1-\beta}{n^\theta}$} (V);
              
               \path [->] (P) edge[bend right=60] node[above] {${\frac{1-\alpha}{n^\theta}}$} (Q);
               \path [->] (M) edge[bend left=60] node[above] {$\frac{\alpha}{n^\theta}$} (N);
              
    \node[shape=circle,minimum size=0.7cm] (K) at (-0.5,0.4) {};
        \node[shape=circle,minimum size=0.7cm] (G) at (1.5,0.4) {};
  \path [->] (A) edge[bend right=60,draw=black] node[above] {$\frac 12$} (K);
    \path [->] (A) edge[bend left=60] node[above] {$\frac 12$} (G);
\tclock{0.6}{-0.8}{columbiablue}{0.8}{1}
\tclock{4.4}{-0.8}{columbiablue}{0.8}{0}
\tclock{-4.4}{-0.8}{columbiablue}{0.8}{0}
\tclock{-6.9}{-0.8}{gray}{0.8}{0}
\tclock{-8.1}{-0.8}{pink}{0.8}{0}
\tclock{-5.65}{-0.8}{columbiablue}{0.8}{0}
\tclock{-3.1}{-0.8}{columbiablue}{0.8}{0}
\tclock{-1.8}{-0.8}{columbiablue}{0.8}{0}
\tclock{-0.6}{-0.8}{columbiablue}{0.8}{0}
\tclock{3.15}{-0.8}{columbiablue}{0.8}{0}
\tclock{1.85}{-0.8}{columbiablue}{0.8}{0}
\tclock{5.65}{-0.8}{columbiablue}{0.8}{0}
\tclock{6.9}{-0.8}{gray}{0.8}{0}
\tclock{8.1}{-0.8}{pink}{0.8}{0}
 \end{tikzpicture}
 \end{center}
 
 \quad 
  
  \quad

 The dynamics just described is Markovian and can be completely characterized in terms of its infinitesimal generator given below in \eqref{ln}. We note that the space state of this Markov process is $\Omega_n:=\{0,1\}^{\Sigma_n}$. Observe that the bulk dynamics preserves the number of particles and our interest is to describe the space-time evolution of this conserved quantity as a solution of some partial differential equation called the \textit{hydrodynamic equation}.

 Note that for the choice $\alpha=\beta=\rho$ a simple computation shows that the Bernoulli product measure of parameter $\rho$ given by:
 $\nu^n_{\rho}(\eta \in\Omega_n:\eta(x)=1)=\rho$ is invariant under  the dynamics. For this choice of the parameters the boundary reservoirs have the same intensity and we do not see any induced current on the system. Nevertheless, in the case $\alpha\neq \beta$, let us say for example $\alpha<\beta$, there is a tendency to have more particles entering into the bulk from the right reservoir and leaving the system from the left reservoir. This is a current which is induced by the difference of the  density at the boundary reservoirs. Note that in the bulk the dynamics is symmetric. In the case  $\alpha\neq \beta$, since we have a finite state Markov process, there is only one stationary measure that we denote by $\mu_n^{ss}$ which is no longer a product measure as in the case $\alpha=\beta$.  By using the matrix ansazt method developed by \cite{BEvans,derrida,derrida2} and references therein, it is possible to obtain information about this measure and an important problem is to analyze the behavior of the system starting from this non-equilibrium stationary state. 
 
 We note that the hydrodynamic limit of this model was studied in \cite{bmns} and the hydrodynamic equations consist in the heat equation with different types of boundary conditions depending on the range of the parameter $\theta$. More precisely, for $0\leq \theta<1$ the heat equation has Dirichlet boundary conditions which fix the value of the density profile at the points $0$ and $1$ to be $\alpha$ and $\beta$, respectively.  In this case we do not see any difference at the macroscopic level with respect to the case $\theta=0$. Nevertheless, for $\theta=1$ the boundary dynamics is slowed enough in such a way that macroscopically the Dirichlet boundary conditions are replaced by a type of Robin boundary conditions. These Robin boundary conditions state that the rate at which particles are injected into the system through the boundary points, is given by the difference of the density at the bulk and the boundary. Finally for $\theta>1$, the boundaries are sufficiently slowed so that the Robin boundary conditions are replaced by Neumann  boundary conditions  stating that macroscopically there is no flux of particles from the boundary reservoirs.

We emphasize that there are many similar models to the one studied in these notes which we summarize as follows. In \cite{mariaeulalia1, me1,mariaeulalia2}, the authors consider a model where  removal of particles can only occur at an interval around the left boundary and the entrance of particles is allowed only at an interval around the right boundary.  
 Their model presents a current exchange between the two reservoirs and shows some similarities with our model for the choice $\theta=1$. Another case already studied in the literature (see \cite{f,LMO}) is when the boundary is not slowed, that corresponds  to our model for the choice $\theta=0$. As mentioned above, the hydrodynamic equation of this model has Dirichlet boundary conditions, see \cite{f} or the equation \eqref{nobound}. A similar model, whose hydrodynamic equation has both Dirichlet boundary conditions and Neumann boundary conditions, was studied in \cite{dfp}. The main difference, at the macroscopic level, is that the end points of the boundary conditions vary with time. The microscopic dynamics there is given by the
SSEP  evolving on $\bb Z$ with
additional births and deaths restricted to a subset of configurations where there is a leftmost hole
and a rightmost particle. In this situation,  at a fixed rate $ j$  birth of particles occur at the position of the leftmost
hole and at the same rate, independently, the rightmost particle dies.  
Another model which has a current is considered in \cite{b}. The dynamics evolves on the discrete torus $\bb Z/n\bb Z$ without reservoirs, but has a surprising phenomenon: a ``battery effect''. This effect produces a current of particles through the system and is due to a single abnormal bond, where the rates to cross from left to right and from right to left are different. Finally, another model which has similarities with the model we consider in these notes is the SSEP with a slow bond, which was studied  in \cite{fgn,fgn3,fgn2}. The dynamics evolves on the  discrete torus $\bb Z/n\bb Z$, and particles exchange positions between nearest neighbor bonds at rate 1, except  at one particular bond,  where the exchange occurs at rate $n^{-\beta}$. In this case   $\beta>0$ is a parameter that rules the slowness of the bond and for that reason  the bond is called the \emph{slow bond}.
The similarity between the slow bond model and the slow boundary model considered in these notes is that  if we ``open'' the discrete torus exaclty at the position of the slow bond, then the slow bond rives rise to a  slow boundary. In \cite{fgn,fgn2}  different hydrodynamic behaviors were obtained, depending on the range of the parameter $\beta$, more precisely,  the hydrodynamic equation  is, in all cases, the heat equation but the boundary conditions vary with the value of $\beta$, exhibiting three different regimes as for the slow boundary model, see \cite{bmns}.

 Our interest in these notes is to go further the hydrodynamical behavior in order to analyze the fluctuations around the hydrodynamical profile. To accomplish this, we restrict ourselves to the  case  $\alpha=\beta=\rho$ and starting from the stationary measure $\nu_\rho^n$ defined above. Our result states that the fluctuations starting from $\nu_\rho^n$ are given by an Ornstein-Uhlenbeck  process  solution of 
\begin{equation*}
d\mathcal{Y}_t=\Delta_\theta \mathcal{Y}_tdt+\sqrt{2\chi(\rho)t}\,\nabla_\theta \, d\mc{W}_t\,,
\end{equation*}
where $\chi(\rho)$ is the variance of $\eta(x)$ with respect to $\nu_\rho^n$, $\mc{W}_t$ is a space-time white noise of unit variance and $\Delta_\theta$ and $\nabla_\theta$ are, respectively,  the Laplacian and derivative operators defined on a space of test functions with different types of boundary conditions depending on the value of $\theta$. 
We note that the case $\theta=0$ was studied in \cite{LMO}  and the case  $\theta=1$ was studied in \cite{fgn4}. In those articles, the  nonequilibrium fluctuations were obtained starting from general initial measures, which include the equilibrium case $\nu^n_\rho$ treated here. We note however, that the case $\theta\neq 1$ is quite difficult to attack at the nonequilibrium scenario since we need to establish a local replacement  (see Lemma \ref{rep_lemma}) in order to close the martingale problem, which we can only prove starting the system from the  equilibrium state.  In a future work, we will dedicate to extending this result to the nonequilibrium situation as, for example, starting the system  from the 
steady state when $\alpha\neq \beta$.

Here follows an outline of these notes. In Section \ref{s2} we give the definition of the model, we recall from \cite{bmns} the hydrodynamic limit and we state our main result, namely, Theorem \ref{OU_limit}. In Section  \ref{sec_proof} we characterize the limit process by means of a martingale problem. Tightness is proved in Section \ref{s6} and in Section \ref{sec_rl} we prove the Replacement Lemma which is the most technical part of these notes.

\section{Statement of results}\label{s2}

\subsection{The model}
For $n\geq{1}$, we denote by $\Sigma_n$ the set $\{1,\cdots,n-1\}$ to which we call the bulk. The symmetric simple exclusion process with slow boundaries is a Markov process $\{\eta_t:\,t\geq{0}\}$ with state space $\Omega_n:=\{0,1\}^{\Sigma_n}$. The slowness of the boundaries is ruled by a parameter that we denote by $\theta\geq 0$. If $\eta$ is a configuration of the state space $\Omega$, then for $x\in\Sigma_n$, the random variable  $\eta(x)$ can take only two values, namely $0$ or $1$. {If $\eta(x)=0$, it means that the site $x$ is vacant, while $\eta(x)=1$ means that the site $x$ is occupied.} The dynamics of this model can be described as follows. In the bulk particles move according to continuous time random walks, but whenever a particle wants to jump to an occupied site, the jump is suppressed. At the left boundary, particles can be created (resp. removed) at rate $\alpha n^{-\theta}$ (resp. $(1-\alpha) n^{-\theta}$). At the right boundary, particles can be created (resp. removed) at rate $\beta n^{-\theta}$ (resp. $(1-\beta) n^{-\theta}$). 

Fix now a finite time horizon $T$. 
The Markov process $\{\eta_t(x): x\in\Sigma_n ; t\in[0,T]\}$ can be characterized in terms of its infinitesimal generator that we denote by $\mc L_{n}^\theta$ and is defined as follows.  For a  function $f:\Omega_n\rightarrow \bb{R}$, we have that 
\begin{equation}\label{ln}
\begin{split}
(\mc L_{n}^{\theta}f)(\eta)\;=\;
& \Big[\frac{\alpha}{n^\theta}(1-\eta(1))+\frac{(1-\alpha)}{n^\theta}\eta(1)\Big]\Big(f( \eta^{1})-f(\eta)\Big)\\
+& \Big[\frac{\beta}{n^\theta}(1-\eta(n-1))+\frac{(1-\beta)}{n^\theta}\eta(n-1)\Big]\Big(f(\eta^{n-1})-f(\eta)\Big)\\
+& \sum_{x=1}^{n-2}\Big(f(\sigma^{x,x+1}\eta)-f(\eta)\Big) \,,
\end{split}
\end{equation}
where $\sigma^{x,x+1}\eta$ is the configuration obtained from $\eta$ by exchanging the occupation variables $\eta(x)$ and $\eta(x+1)$, that is,
\begin{equation}\label{sigma}
(\sigma^{x,x+1}\eta)(y)=\left\{\begin{array}{cl}
\eta(x+1),& \mbox{if}\,\,\, y=x\,,\\
\eta(x),& \mbox{if} \,\,\,y=x+1\,,\\
\eta(y),& \mbox{otherwise.}
\end{array}
\right.
\end{equation}
 and for $x=1,n-1$ $\eta^x$ is the configuration obtained from $\eta$ by flipping  the occupation  variable $\eta(x)$:
 \begin{equation}\label{split}
(\eta^x)(y)=\left\{\begin{array}{cl}
1-\eta(y),& \mbox{if}\,\,\, y=x\,,\\
\eta(y),& \mbox{otherwise.}
\end{array}
\right.
\end{equation}

Let  $\mc D([0,T],\Omega_n)$ be  the space of trajectories which are right continuous and  with left limits, taking values in $\Omega_n$.
Denote by $\bb P_{\mu_{n}}^{\theta,n} $ the probability on  $\mc D([0,T],\Omega_n)$ induced by the Markov process with generator $n^2\mc L_n^\theta$ and the initial measure $\mu_n$ and denote by $\bb E_{\mu_n}^{\theta,n}$ the expectation with respect to $\bb P_{\mu_n}^{\theta,n}$. 
%

\subsection{Stationary measures}\label{ss2.2}
The stationary measure $\mu_n^{ss}$ for this model when $\alpha=\beta=\rho\in(0,1)$ is the Bernoulli product measure given by $$\nu^n_\rho\Big(\eta\in \Omega_n:\eta(x)=1\Big)=\rho\,.$$ But in the general case, where  $\alpha\neq \beta$, the stationary measure  $\mu_n^{ss}$ does not have independent marginals,  see  \cite{derrida}.
What we can say about the
 stationary behavior of this model is  that the density of particles has a behavior very close to a linear profile, which  depends on the range of $\theta$ in the sense of the following definition:
\begin{definition}\label{eq3} Let $\gamma:[0,1]\to[0,1]$ be a measurable profile.
A sequence $\{\mu_n\}_{n\in \bb N}$ is said to be \textit{associated} to $\gamma$  if,  for any $ \delta >0 $ and any continuous function $ f: [0,1]\to\bb R $ the following limit holds:
\begin{equation*}
\lim_{n\to\infty}
\mu_{n} \Bigg(\, \eta:\, \Big| \frac {1}{n} \sum_{x = 1}^{n\!-\!1} f(\pfrac{x}{n})\, \eta(x)
- \int f(u)\, \gamma(u)\, du \Big| > \delta\,\Bigg) \;=\; 0\,.
\end{equation*}
\end{definition}
For $\mu_{n}$ equal to  the stationary measure $\mu^{ss}_n$, the limit above is called  the \emph{hydrostatic limit}. 

\begin{theorem}[Hydrostatic Limit, \cite{bmns}]
\quad

Let $\mu_n^{ss}$ be the stationary  probability measure in $\Omega_n$  wrt the Markov process with infinitesimal generator $n^2 \mc L_n^\theta$, defined in \eqref{ln}. The sequence $\{\mu_n^{ss}\}_{n\in\mathbb{N}}$ is associated (in the sense of  Definition \ref{eq3}) to the profile $\overline{\rho}: [0,1]\to\mathbb R$ given by
\begin{equation}\label{perfis_estacionarios}
\overline{\rho}(u)\;=\;
\begin{cases}
(\beta - \alpha)u+\alpha, & \mbox{if }\theta\in [0,1), \\
\frac{\beta - \alpha}{3}u + \alpha +\frac{\beta-\alpha}{3}, & \mbox{if }\theta = 1, \\
\frac{\beta + \alpha}{2}, & \mbox{if }\theta \in(1,\infty),
\end{cases}
\end{equation}
for all $u\in [0,1]$.
\end{theorem}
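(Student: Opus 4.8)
The plan is to reduce the statement to the asymptotics of the one- and two-point functions of $\mu_n^{ss}$, both of which close under the dynamics because the occupation variables are Bernoulli. Write $\rho_n(x):=\bb E_{\mu_n^{ss}}[\eta(x)]$ for $x\in\Sigma_n$. Since $\mu_n^{ss}$ is invariant, $\bb E_{\mu_n^{ss}}[(\mc L_n^\theta\,\eta)(x)]=0$ for every $x$, and a direct computation from \eqref{ln} (using $\eta(x)^2=\eta(x)$) shows that the left boundary part acting on $\eta(1)$ equals $n^{-\theta}(\alpha-\eta(1))$, the right boundary part acting on $\eta(n-1)$ equals $n^{-\theta}(\beta-\eta(n-1))$, and the bulk part acts as the discrete Laplacian. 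Taking expectations yields the closed discrete boundary value problem
\begin{equation*}
\rho_n(x+1)+\rho_n(x-1)-2\rho_n(x)=0,\qquad 2\le x\le n-2,
\end{equation*}
together with the Robin-type conditions $n^{-\theta}(\alpha-\rho_n(1))+\rho_n(2)-\rho_n(1)=0$ and $n^{-\theta}(\beta-\rho_n(n-1))+\rho_n(n-2)-\rho_n(n-1)=0$.

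First I would solve this explicitly. The bulk equation forces $\rho_n$ to be affine, $\rho_n(x)=A_n+B_nx$, and substituting into the two boundary conditions gives, with $c_n:=n^{-\theta}$,
\begin{equation*}
B_n=\frac{c_n(\beta-\alpha)}{2+c_n(n-2)},\qquad A_n=\alpha+\frac{(1-c_n)(\beta-\alpha)}{2+c_n(n-2)}.
\end{equation*}
Then I would take the scaling limit $x=\lfloor un\rfloor$ in the three regimes, using $c_nn=n^{1-\theta}$. When $\theta\in[0,1)$ one has $c_nn\to\infty$, so $B_n n\to\beta-\alpha$ and $A_n\to\alpha$; when $\theta=1$, $c_nn=1$ and $c_n(n-2)\to1$, giving $B_n n\to(\beta-\alpha)/3$ and $A_n\to\alpha+(\beta-\alpha)/3$; when $\theta>1$, $c_nn\to0$, so $B_n n\to0$ and $A_n\to(\alpha+\beta)/2$. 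In each case $\rho_n(\lfloor un\rfloor)\to\overline\rho(u)$ uniformly in $u\in[0,1]$, reproducing \eqref{perfis_estacionarios}.

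To upgrade this mean convergence to the convergence in probability required by Definition \ref{eq3}, I would apply Chebyshev's inequality to the empirical average $\tfrac1n\sum_x f(x/n)\eta(x)$. Its expectation converges to $\int f(u)\overline\rho(u)\,du$ by the uniform convergence of $\rho_n$ together with a Riemann-sum argument, so it suffices that its variance vanish. Writing the variance as $\tfrac1{n^2}\sum_{x,y}f(x/n)f(y/n)\varphi_n(x,y)$ with $\varphi_n(x,y):=\bb E_{\mu_n^{ss}}[\eta(x)\eta(y)]-\rho_n(x)\rho_n(y)$, the diagonal contributes $O(1/n)$ since $\varphi_n(x,x)=\rho_n(x)(1-\rho_n(x))\le 1/4$, and everything reduces to the off-diagonal correlations.

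The main obstacle is controlling $\varphi_n(x,y)$ for $x\ne y$. As for the one-point function, stationarity and idempotency make the two-point function close: $\varphi_n$ solves a discrete Laplace equation on the triangle $\{1\le x<y\le n-1\}$, with inhomogeneous Robin conditions on the two reservoir edges inherited from \eqref{ln} and a special relation along the diagonal $y=x+1$ coming from the exclusion rule. I would solve or estimate this two-dimensional discrete harmonic problem to obtain the uniform bound $|\varphi_n(x,y)|\le C/n$; inserting it into the variance identity gives $O(1/n)\to0$, and Chebyshev then yields Definition \ref{eq3}. Establishing the $O(1/n)$ correlation decay — equivalently, a discrete maximum-principle or explicit Green's-function estimate for the two-dimensional problem — is the technically delicate point, since the boundary data degenerate differently in each of the three $\theta$-regimes.
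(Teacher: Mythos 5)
Your first half coincides, formula for formula, with what the paper itself records in Section~\ref{ss2.2}: the closed recurrence for $\rho^n(x)$ obtained from stationarity together with $\eta(x)^2=\eta(x)$, the affine solution (your $B_n$ and $A_n$ are exactly the paper's $a_n=\frac{\beta-\alpha}{2n^\theta+n-2}$ and $b_n=\alpha+a_n(n^\theta-1)$ after clearing the factor $c_n=n^{-\theta}$), and the correct limits in the three regimes, giving $\max_{x\in\Sigma_n}|\rho^n(x)-\overline\rho(x/n)|\to 0$. Be aware, though, that the paper does not prove the Hydrostatic Limit at all: the theorem is imported from \cite{bmns}, and Section~\ref{ss2.2} only supplies this one-point computation. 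So the part of your argument that actually carries the statement --- upgrading convergence of the mean to the convergence in probability demanded by Definition~\ref{eq3} --- has no counterpart in this paper and has to stand on its own.

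That is where the gap sits. The variance decomposition and the $O(1/n)$ diagonal bound are fine, and you correctly identify that everything reduces to the off-diagonal estimate $|\varphi_n(x,y)|\le C/n$; but you do not prove it, you only name the object whose proof would finish the argument. This is not a routine maximum-principle application: the centered two-point function solves an \emph{inhomogeneous} discrete problem on the triangle $\{1\le x<y\le n-1\}$ whose source is supported on the superdiagonal and has size of order $a_n^2$ (the square of the discrete gradient of the stationary profile), and whose reservoir boundary conditions carry the degenerating factor $n^{-\theta}$, so the comparison function must be produced uniformly over the three $\theta$-regimes. To close the proof you must either solve this system explicitly (as is classical for $\theta=0$, where the boundary-driven SSEP correlations admit a closed form of order $1/n$) or exhibit an explicit supersolution or Green's-function bound for each regime; this is precisely the technical content of \cite{bmns}. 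The heuristic is on your side --- the source strength $a_n^2 n$ is of order $1/n$ for $\theta\le 1$ and $n^{1-2\theta}=o(1/n)$ for $\theta>1$, so the claimed bound is the right one --- but as written the proposal asserts, rather than establishes, the only step of the theorem that is not elementary.
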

Another  feature that we can say about the stationary state of the model studied in this paper is that the profiles in \eqref{perfis_estacionarios} are very close to the mean of $\eta(x)$ taken with respect to the stationary measure $\mu_n^{ss}$. To state this result properly, we start by defining for an initial measure $\mu_n$ in $\Omega_n$, for $x\in \Sigma_n$ and for $t\geq 0$ the empirical mean given by 
\begin{equation}\label{mean}
\rho^n_t(x):=\bb E_{\mu_n}^{\theta,n}[\eta_t(x)]\,.
\end{equation} 
If in the expression above  $\mu_n=\mu_n^{ss}$, then
$\rho^n_t(x)$ does not depend on $t$, so that $\rho^n_t(x)=\rho^n(x)$. From \cite{bmns}, we have that $\rho^n(x)$ satisfies the following recurrence relations:
	\begin{equation*}
	\begin{cases}
	0 =[\rho^n(x+1)-\rho^n(x)]+[\rho^n(x-1)-\rho^n(x)],\quad \mbox{if } \;\;\;x\in\{2,\dots,n\!-\!2\},\\
	0 =[\rho^n(2)-\rho^n(1)]+n^{-\theta}[\alpha - \rho^n(1)], \\
	0 =n^{-\theta}[\beta - \rho^n(n\!-\!1)]+[\rho^n(n\!-\!2)-\rho^n(n\!-\!1)].
	\end{cases}
	\end{equation*}
	A simple computation shows that $\rho^n(x)$ is given by
	$\rho^n(x)=a_nx+b_n,$ for all  $ x\in \Sigma_n,$
	where 
	$a_n=\frac{\beta -\alpha}{2n^\theta+n\!-\!2} $ and $b_n=\alpha + a_n(n^\theta -1).$
	Moreover, we conclude that
	\begin{equation*}
	\lim_{n\to\infty}\Big(\max_{x\in \Sigma_n}\big|\rho^n(x)-\overline{\rho}(\pfrac{x}{n})\big|\Big)=0.
	\end{equation*}
	

\subsection{Hydrodynamic limit}

In \cite{bmns} it was  established the hydrodynamic limit of the model for any  $\theta\geq 0$. For completeness we recall that  result now. Fix a measurable density profile $ \rho_0: [0,1] \rightarrow [0,1 ]$ and for each  $n \in \bb N$, let $\mu_n$ be a probability measure on $\Omega_n$.

\begin{theorem}[Hydrodynamic Limit, \cite{bmns}]
\quad

 Suppose that the 
sequence $\{\mu_n\}_{n\in \bb N}$ is \textit{associated} to a profile $\rho_0(\cdot)$ in the sense of Definition \ref{eq3}. 
Then,  for each $ t \in [0,T] $, for any $ \delta >0 $ and any continuous function $ f:[0,1]\to\bb R $, 
\begin{equation*}
\lim_{ n \rightarrow +\infty }
\bb P_{\mu_{n}}^{\theta,n} \Bigg[\eta_{\cdot} : \Big\vert \frac{1}{n} \sum_{x =1}^{n-1}
f(\pfrac{x}{n})\, \eta_{tn^2}(x) - \int f(u)\, \rho(t,u)\, du\, \Big\vert
> \delta \Bigg] \;=\; 0,
\end{equation*}
 where  $\rho(t,\cdot)$ is the unique weak solution of the 
 heat equation 
 
 \begin{equation}\label{heat_eq}
\begin{cases}
\p_t \rho(t,u)= \p_u^2 \rho(t,u)\,, & \textrm{for } t>0\,,\, u\in (0,1),\\
\rho(0,u)=\rho_0(u)\,,& u\in [0,1].
\end{cases}
\end{equation}
with boundary conditions that depend on the range of $\theta$, which are given by:
  \begin{align}
 & \text{For } 
 \theta<1,  \;\p_u \rho(t,0) = \alpha  \text{ and } \p_u \rho(t,1) = \beta, \text{  for } t>0. \label{nobound}\\
 &  \text{For } \theta=1,\; \p_u \rho(t,0) = \rho(t,0)-\alpha \text{ and } \p_u \rho(t,1) = \beta-\rho(t,1), \text{ for } t>0.\label{Robin}\\
& \text{For }\theta>1, \;\p_u \rho(t,0) = \p_u \rho(t,1) =0, \text{ for }t>0.\label{Neumann} 
 \end{align}
\end{theorem}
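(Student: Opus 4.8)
\noindent\emph{Proof sketch.}\quad The plan is to follow the standard empirical-measure route to hydrodynamics. I would encode the conserved field through the empirical measure
\begin{equation*}
\pi^n_t(du)\;=\;\frac{1}{n}\sum_{x=1}^{n-1}\eta_{tn^2}(x)\,\delta_{x/n}(du)\,,
\end{equation*}
viewed as a random element of $\mc D([0,T],\mc M_+)$, where $\mc M_+$ is the space of finite positive measures on $[0,1]$ with the weak topology. The argument splits into three parts: (i) tightness of the laws of $\{\pi^n_\cdot\}_n$; (ii) identification of every limit point as (the law of) a trajectory $\rho(t,u)\,du$ whose density is a weak solution of \eqref{heat_eq} with the boundary conditions \eqref{nobound}--\eqref{Neumann}; and (iii) uniqueness of that weak solution, which upgrades subsequential convergence to full convergence and yields the stated law of large numbers.

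The engine is Dynkin's formula. For a test function $G\in C^{1,2}([0,T]\times[0,1])$ the process
\begin{equation*}
M^n_t(G)\;=\;\langle\pi^n_t,G_t\rangle-\langle\pi^n_0,G_0\rangle-\int_0^t\big(\partial_s+n^2\mc L^\theta_n\big)\langle\pi^n_s,G_s\rangle\,ds
\end{equation*}
is a martingale. Applying $n^2\mc L^\theta_n$ to $\eta\mapsto\langle\pi^n,G_s\rangle$, the bulk exchanges produce, after two summations by parts, the discrete Laplacian $n^2[G(\tfrac{x+1}{n})+G(\tfrac{x-1}{n})-2G(\tfrac{x}{n})]\to\partial^2_u G$, together with surface terms carrying the discrete derivatives of $G$ at $0$ and $1$. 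A short computation (using $\eta(1)^2=\eta(1)$) collapses the two boundary Glauber generators into the reservoir terms
\begin{equation*}
n^{1-\theta}\,G_s(\tfrac1n)\,\big(\alpha-\eta_s(1)\big)\qquad\text{and}\qquad n^{1-\theta}\,G_s(\tfrac{n-1}{n})\,\big(\beta-\eta_s(n-1)\big)\,.
\end{equation*}
The exponent $n^{1-\theta}$ is exactly what produces the trichotomy: for $\theta>1$ these terms vanish and only the Neumann surface terms survive, giving \eqref{Neumann}; for $\theta=1$ they stay of order one and combine with the surface terms into the Robin relations \eqref{Robin}; for $\theta<1$ the prefactor blows up, so one tests against functions with $G(0)=G(1)=0$ to kill the divergent contribution and imposes the reservoir (Dirichlet) values $\rho(t,0)=\alpha$, $\rho(t,1)=\beta$ separately, in the regime underlying \eqref{nobound}.

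Tightness follows from the Aldous--Rebolledo criterion applied to $t\mapsto\langle\pi^n_t,G\rangle$: the drift is controlled by the generator computation above, and the predictable quadratic variation $\langle M^n(G)\rangle_t$ is $O(1/n)$ (on the diffusive scale $n^2$, each of the $O(n)$ bonds contributes an increment of size $O(n^{-2})$, squared), so the martingale part is negligible in $L^2$. The genuinely delicate ingredient, which I expect to be the crux, is the passage from the microscopic occupation $\eta_s(1)$ to the macroscopic boundary value of the density. This is a boundary replacement lemma asserting that $\eta_s(1)$ may be exchanged, in the time-integrated $L^1(\bb P^{\theta,n}_{\mu_n})$ sense, for a local spatial average $\tfrac{1}{\eps n}\sum_{y=1}^{\eps n}\eta_s(y)$, whose limit is $\rho(s,0)$; the standard tool is the Dirichlet-form/energy estimate, bounding the time integral of the boundary occupation against the carr\'e du champ of $n^2\mc L^\theta_n$ and the relative entropy of $\mu_n$ with respect to a reference product measure. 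The same energy estimate controls the diverging $n^{1-\theta}$ factor when $\theta<1$ and shows the limiting density lies in $H^1$, which is what legitimizes the boundary traces.

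Finally, uniqueness of the weak solution of \eqref{heat_eq} under each of the three boundary conditions is classical by the energy method: the difference of two solutions satisfies the homogeneous problem, and an integration by parts shows its $L^2([0,1])$ norm is nonincreasing, hence identically zero. Combined with tightness and the identification of limit points, this closes the argument. The single model-specific obstacle is thus the boundary replacement and the uniform control of the $n^{1-\theta}$ prefactor across the three regimes; everything else is the routine machinery of \cite{bmns}-type hydrodynamic limits.
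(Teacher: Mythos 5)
This theorem is not proved in the paper at all: it is recalled verbatim from \cite{bmns} for completeness, so there is no internal proof to compare against. Your sketch follows essentially the same route as that reference --- empirical measure, Dynkin's formula with the $n^{1-\theta}$ reservoir terms producing the Dirichlet/Robin/Neumann trichotomy, Aldous--Rebolledo tightness, a boundary replacement lemma via Dirichlet-form estimates as the crux, and uniqueness by the energy method --- and you even implicitly correct the misprint in \eqref{nobound}, where the stated condition $\p_u\rho(t,0)=\alpha$, $\p_u\rho(t,1)=\beta$ should read $\rho(t,0)=\alpha$, $\rho(t,1)=\beta$, as the surrounding text and \eqref{perfis_estacionarios} make clear.
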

\begin{remark}
We note that the profiles in \eqref{perfis_estacionarios} are stationary solutions of the heat equation with the corresponding boundary conditions  given above.
\end{remark}

\section{Density fluctuations}

\subsection{The space of test functions}
The space $ C^\infty([0,1])$ is the space of functions  $f:[0,1]\to \bb R$ such that $f$ is continuous in $[0,1]$ as well as   all its  derivatives. 
\begin{definition}\label{def1} Let $\mathcal S_{\theta}$ denote the set of functions $f\in C^\infty([0,1])$ such that for any $k\in\mathbb{N}\cup \{0\}$ it holds that 

\begin{enumerate}[(1)]

\item for $\theta<1$: 
$ \p_u^{2k} f(0)=\p_u^{2k} f(1)=0.$ 

\item for $\theta=1$:
$ \p_u^{2k+1} f(0)= \p_u^{2k}f(0)$ and $\p_u^{2k+1} f(1)=- \p_u^{2k}f(1).
$

\item for $\theta>1$:
$ \p_u^{2k+1} f(0)=\p_u^{2k+1} f(1)=0.$ 
\end{enumerate}
\end{definition}

\begin{definition}\label{def:laplacian_operator}

For $\theta\geq 0$, let $-\Delta_\theta$ be the positive operator, self-adjoint on $L^2{[0,1]}$, defined  on $f\in\mc S_\theta$ by
\begin{equation}\label{laplacian}
\Delta_\theta f(u)\;=\;\left\{\begin{array}{cl}
\partial_u^2 f(u)\,,& \mbox{if}\,\,\,u\in(0,1),\\
\partial_u^2 f(0^+)\,,& \mbox{if} \,\,\,u=0,\\
\partial_u^2 f(1^-)\,,& \mbox{if} \,\,\,u=1.\\
\end{array}
\right.
\end{equation}
Above, $\partial_u^2 f(a^\pm)$ denotes the side limits at the point $a$.  Analogously, let\break   $\nabla_\theta: \mc S_\theta\rightarrow C^\infty([0,1])$  be the operator given by 
\begin{equation}\label{nabla}
\nabla_\theta f(u)\;=\;\left\{\begin{array}{cl}
\partial_u f(u)\,,& \mbox{if}\,\,\,u\in(0,1),\\
\partial_u f(0^+)\,,& \mbox{if} \,\,\,u=0,\\
\partial_u f(1^-)\,,& \mbox{if} \,\,\,u=1.\\
\end{array}
\right.
\end{equation}

\end{definition}

\begin{definition}\label{def2}
Let $T_t^\theta:\mc S_\theta\to\mc S_\theta$ be the semigroup associated to \eqref{heat_eq} with the corresponding boundary conditions for the case $\alpha=\beta=0$. That is, given $f\in \mc S_\theta$, by $T_t^\theta f$ we mean the solution of the homogeneous version of \eqref{heat_eq} with initial condition $f$.
\end{definition}

\begin{definition}
Let $\mc S'_{\theta}$ be the topological dual of $\mc S_{\theta}$ with respect to the topology generated by the seminorms 
 \begin{equation}\label{semi-norm}
\|f\|_{k}=\sup_{u\in[0,1]}|\partial_u^kf(u)|\,,
\end{equation}
where $k\in\mathbb{N}\cup \{0\}$.
In other words, $\mc S'_\theta$ consists of all linear functionals \break $f:\mc S_\theta\to \bb R$ which are continuous with respect to all the seminorms $\Vert \cdot \Vert_k$.
\end{definition}

Let  $\mc D([0,T],\mc S'_\theta)$ (resp. $\mc C([0,T], \mc S'_\theta)$) be the space of trajectories which are right continuous and with left limits (resp. continuous),  taking values in $\mc S'_\theta$.

The expression for $T_t^\theta$,  $\theta\geq 0$, is presented in the next proposition:
\begin{proposition} Let $\theta\geq 0$.
Suppose that $\rho_0\in L^2[0,1]$. Then 
\begin{equation}\label{eq6semi}
(T_t^\theta \rho_0)(u) \;:=\; \sum_{n=1}^\infty a_n\,e^{-\lambda_n t}\,\Psi_n(u)\,,
\end{equation}
where $\{\Psi_n\}_{n\in \bb N}$ is an orthonormal basis of  $L^2[0,1]$ constituted by  eigenfunctions  of the associated  Regular Sturm-Liouville Problem (concerning the operator $\Delta_\theta$) and $a_n$ are the Fourier coefficients of $\rho_0$ in the basis $\{\Psi_n\}_{n\in \bb N}$. 

\begin{itemize}
\item For $\theta <1$, the corresponding orthonormal basis of  $L^2[0,1]$ is 
\[
\begin{cases}
\Psi_n(u)\;=\; \sqrt{2}\sin (n\pi u)\,,\; & \text{ for }n\in \bb N\,,\\
\Psi_0(u)\;\equiv\; 1\,.
\end{cases}
\]
The eigenvalues of the associated  Regular Sturm-Liouville Problem (concerning the operator $\Delta_\theta$) are given by $\lambda_n= n^2\pi^2$.

 \item For $\theta=1$, the corresponding orthornormal basis of $L^2[0,1]$ is a linear combination of sines and cosines, namely,
 \[\Psi_n(u)\;=\; A_n\sin (\sqrt{\lambda_n} u)+A_n\sqrt{\lambda_n}\cos (\sqrt{\lambda_n} u)\,,\; \text{ for }n\in \bb N\cup \{0\}\,,
\]
where $A_n$ is a normalizing constant. The eigenvalues $\lambda_n$ do not have an explicit formula, but it can  verified that $\lambda_n\sim n^2\pi^2$.
  \item For $\theta >1$, the corresponding orthonormal basis of  $L^2[0,1]$ is 
  \[
\begin{cases}
\Psi_n(u)\;=\; \sqrt{2}\cos (n\pi u)\,,\; & \text{ for }n\in \bb N\,,\\
\Psi_0(u)\;\equiv\; 1\,.
\end{cases}
\]
\end{itemize}
The eigenvalues of the associated  Regular Sturm-Liouville Problem (concerning the operator $\Delta_\theta$) are given by $\lambda_n= n^2\pi^2$.

\end{proposition}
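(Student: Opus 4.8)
The plan is to reduce the statement to the spectral theory of the operator $-\Delta_\theta$ and then to solve the associated eigenvalue equation explicitly in each regime. Because $-\Delta_\theta$ is a regular Sturm--Liouville operator, i.e.\ a self-adjoint nonnegative operator on $L^2[0,1]$ with compact resolvent, the classical theorem on such operators supplies a complete orthonormal system $\{\Psi_n\}$ of eigenfunctions with eigenvalues $0\le\lambda_1<\lambda_2<\cdots\to+\infty$. Writing $\rho_0=\sum_n a_n\Psi_n$ with $a_n=\langle\rho_0,\Psi_n\rangle$, the semigroup generated by $\Delta_\theta$ acts by $e^{t\Delta_\theta}\rho_0=\sum_n a_n e^{-\lambda_n t}\Psi_n$, so the first task is to identify this with the solution operator $T_t^\theta$ of Definition \ref{def2}. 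For that I would check that the right-hand side of \eqref{eq6semi} defines, for each $t>0$, a smooth function that solves the homogeneous heat equation \eqref{heat_eq}, lies in $\mc S_\theta$ (so that it obeys the boundary conditions), and converges to $\rho_0$ in $L^2$ as $t\downarrow 0$; the factors $e^{-\lambda_n t}$ together with the eigenvalue growth $\lambda_n\sim n^2\pi^2$ make the series and all its term-by-term derivatives converge uniformly for $t$ bounded away from $0$, which legitimizes differentiation under the sum. Uniqueness of weak solutions of \eqref{heat_eq} then forces the sum to coincide with $T_t^\theta\rho_0$.

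It remains to compute $\{\Psi_n,\lambda_n\}$ explicitly, which amounts to solving $\Psi''+\lambda\Psi=0$ on $(0,1)$ under the boundary conditions obtained by reading Definition \ref{def1} at $k=0$: Dirichlet $f(0)=f(1)=0$ for $\theta<1$, Robin $f'(0)=f(0)$ and $f'(1)=-f(1)$ for $\theta=1$, and Neumann $f'(0)=f'(1)=0$ for $\theta>1$. Starting from the general solution $\Psi(u)=A\cos(\sqrt\lambda\,u)+B\sin(\sqrt\lambda\,u)$, the Dirichlet conditions force $A=0$ and $\sin\sqrt\lambda=0$, hence $\lambda_n=n^2\pi^2$ and $\Psi_n=\sqrt2\sin(n\pi u)$; the Neumann conditions force $B=0$ and $\sin\sqrt\lambda=0$, hence $\lambda_n=n^2\pi^2$ and $\Psi_n=\sqrt2\cos(n\pi u)$, together with the zero mode $\lambda_0=0$, $\Psi_0\equiv1$. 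In both cases completeness and normalization are the standard Fourier facts. Worth noting here is that each such $\Psi_n$ indeed belongs to $\mc S_\theta$: the eigenvalue identity $\p_u^2\Psi_n=-\lambda_n\Psi_n$ propagates the first-order boundary condition to all the higher-order conditions listed in Definition \ref{def1}.

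The genuinely delicate case is $\theta=1$. The condition $\Psi'(0)=\Psi(0)$ forces $A=B\sqrt\lambda$ and already produces the stated shape $\Psi_n(u)=A_n\sin(\sqrt{\lambda_n}\,u)+A_n\sqrt{\lambda_n}\cos(\sqrt{\lambda_n}\,u)$, with $A_n$ fixed by $\|\Psi_n\|_{L^2}=1$, while the second condition $\Psi'(1)=-\Psi(1)$ yields the transcendental dispersion relation $\tan\sqrt\lambda=2\sqrt\lambda/(\lambda-1)$, which has no closed-form solution. Here I would invoke the regular Sturm--Liouville theorem to guarantee that the roots form a simple increasing sequence $\lambda_n\to\infty$ with complete eigenfunctions, and would read off the asymptotics $\lambda_n\sim n^2\pi^2$ from the dispersion relation by observing that its right-hand side tends to $0$, so that the roots $\sqrt{\lambda_n}$ accumulate at the zeros $n\pi$ of $\tan$. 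Establishing this asymptotic control is the main obstacle: it is what underlies the uniform-convergence arguments of the first paragraph, and, unlike the Dirichlet and Neumann regimes, it cannot be extracted from an explicit formula but only through the qualitative spectral theory together with a root-counting argument on the dispersion relation.
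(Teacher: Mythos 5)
Your proposal is correct and takes essentially the same route as the paper: both reduce the statement to the associated regular Sturm--Liouville problem for $\Psi''+\lambda\Psi=0$ with the boundary conditions read off from Definition \ref{def1} at $k=0$. The paper merely states these boundary value problems and delegates their explicit solution to the references (the $\theta=1$ case to earlier work, the cases $\theta\neq 1$ to a textbook), whereas you carry out the computation, including the Robin dispersion relation $\tan\sqrt{\lambda}=2\sqrt{\lambda}/(\lambda-1)$ and the root asymptotics $\lambda_n\sim n^2\pi^2$, which is a more self-contained version of the same argument.
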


\begin{proof}
 For $\theta=1$ the expression for $T_t^\theta$ has been obtained in \cite{fgn4}.
 For the case $\theta\neq 1$, 
 as in \cite{fgn4}, we state the associated \textit{Regular Sturm-Liouville Problem} (for details on this subject we refer to \cite{birkhoff89}, for instance):
 \begin{equation*}
  \mbox{For }~\theta<1:\quad
\begin{cases}&\Psi''(u)+\lambda \Psi(u) \;=\; 0\,, \quad u\in(0,1)\,,    \\
 & \Psi(0)\;=\;0\,, \quad
  \Psi(1)\;=\;0\,;
 \end{cases}
 \end{equation*}
  \begin{equation*}
  \mbox{For }~\theta\geq 1:\quad
\begin{cases}&\Psi''(u)+\lambda \Psi(u) \;=\; 0\,, \quad u\in(0,1)\,,    \\
 & \Psi'(0)\;=\;0\,, \quad
  \Psi'(1)\;=\;0\,.
 \end{cases}
 \end{equation*}
The solution of each one of the problems above (the eigenvalues $\lambda_n$ and the eigenfunctions $\Psi_n$) can be found in  Chapter 10 of \cite{boyce}.
\end{proof}
As a consequence,  the series \eqref{eq6semi} converges exponentially fast, implying  that $(T_t^\theta\rho_0)(u)$ is smooth in space and time for any $t>0$. This observation implies a property of $T_t^\theta:\mc S_\theta\to\mc S_\theta$ stated in the next corollary. 

\begin{corollary}\label{cor42}
If $f\in \mc S_\theta$, then for any $t>0$,  $ T_t^\theta f \in \mc S_\theta$ and  $\Delta_\theta T_t^\theta f\in \mc S_\theta$.
\end{corollary}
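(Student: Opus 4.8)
The plan is to read off the conclusion from the spectral representation \eqref{eq6semi} together with the quadratic growth $\lambda_n\sim n^2\pi^2$ of the eigenvalues. First I would expand $f\in\mc S_\theta$ in the orthonormal basis $\{\Psi_n\}$, writing $f=\sum_n a_n\Psi_n$ with $a_n=\langle f,\Psi_n\rangle$, so that by the Proposition above $T_t^\theta f=\sum_n a_n e^{-\lambda_n t}\Psi_n$. Since $|a_n|\le\|f\|_{L^2[0,1]}$ and, from the explicit trigonometric form of the eigenfunctions, $\|\p_u^m\Psi_n\|_\infty\lesssim\lambda_n^{m/2}\sim(n\pi)^m$ for every $m$, one gets $\sum_n|a_n|\,e^{-\lambda_n t}\|\p_u^m\Psi_n\|_\infty\lesssim\|f\|_{L^2}\sum_n(n\pi)^m e^{-n^2\pi^2 t}<\infty$ for every $t>0$ and every $m$. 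This is precisely the exponentially fast convergence already noted after the Proposition, and it licenses differentiating \eqref{eq6semi} term by term arbitrarily many times, with uniform convergence on $[0,1]$; in particular $T_t^\theta f\in C^\infty([0,1])$.

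It then remains to check the boundary conditions of Definition \ref{def1}. The key point is that each eigenfunction $\Psi_n$ already lies in $\mc S_\theta$, which is immediate from the explicit forms: for $\theta<1$ all even derivatives of $\sqrt2\sin(n\pi u)$ vanish at $0$ and $1$; for $\theta>1$ all odd derivatives of $\sqrt2\cos(n\pi u)$ vanish there; and for $\theta=1$ the combination of sine and cosine satisfies $\p_u^{2k+1}\Psi_n=\pm\,\p_u^{2k}\Psi_n$ at the endpoints. Because the boundary relations in Definition \ref{def1} are linear and the series for $T_t^\theta f$ converges together with all its derivatives uniformly up to the boundary, I can evaluate the relevant derivatives at $u=0,1$ term by term and conclude that $T_t^\theta f$ inherits the same relations, i.e. $T_t^\theta f\in\mc S_\theta$.

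For the last assertion I would use that $\Delta_\theta$ maps $\mc S_\theta$ into itself, a fact that follows by inspection of Definition \ref{def1}: applying $\p_u^2$ shifts the index in each family of boundary relations (even derivatives to even, odd to odd, and the $\theta=1$ identities are preserved), so $\Delta_\theta g\in\mc S_\theta$ whenever $g\in\mc S_\theta$. Applying this to $g=T_t^\theta f\in\mc S_\theta$ gives $\Delta_\theta T_t^\theta f\in\mc S_\theta$. Equivalently, one may differentiate \eqref{eq6semi} directly to obtain $\Delta_\theta T_t^\theta f=-\sum_n\lambda_n a_n e^{-\lambda_n t}\Psi_n$, which has the same form as before with coefficients $\lambda_n a_n$; the extra polynomial factor $\lambda_n$ is absorbed by $e^{-\lambda_n t}$, so the identical estimates apply.

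I expect the only delicate point to be the uniform convergence of the differentiated series up to the boundary in the case $\theta=1$, where the eigenvalues $\lambda_n$ and the normalizing constants $A_n$ are not explicit. There one must verify that $A_n\sqrt{\lambda_n}$ stays bounded, so that $\|\p_u^m\Psi_n\|_\infty\lesssim\lambda_n^{m/2}$ continues to hold, using $\lambda_n\sim n^2\pi^2$ together with the normalization $\|\Psi_n\|_{L^2}=1$. Everything else is routine once the termwise differentiation is justified.
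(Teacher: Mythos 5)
Your argument is correct and is exactly the route the paper takes: it deduces the corollary from the spectral formula \eqref{eq6semi}, using the exponential decay $e^{-\lambda_n t}$ with $\lambda_n\sim n^2\pi^2$ to justify termwise differentiation and then checking the boundary relations of Definition \ref{def1} on each eigenfunction (the paper merely states this and defers the details to \cite{fgn4}). Your proposal simply fills in those details, including the one genuinely delicate point (boundedness of $A_n\sqrt{\lambda_n}$ for $\theta=1$), so no further comment is needed.
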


We observe that the previous result is needed  in  the proof of uniqueness of the corresponding Ornstein-Uhlenbeck process (which is defined in the next section). Its proof is a consequence  of the formula \eqref{eq6semi}, see \cite{fgn4} for more details.

\subsection{Ornstein-Uhlenbeck process}
Fix  $\rho\in (0,1)$.  Based on \cite{HS, kl}, we give here a characterization of the generalized Ornstein-Uhlenbeck process which is a  solution of
\begin{equation}\label{OU}
d\mathcal{Y}_t=\Delta_\theta \mathcal{Y}_tdt+\sqrt{2\chi(\rho)t}\,\nabla_\theta \, d\mc{W}_t\,,
\end{equation}
 where $\mc{W}_t$ is a space-time white noise of unit variance and $\chi(\rho)=\int (\eta(x)-\rho)^2d\nu^n_\rho=\rho(1-\rho)$, in terms of a martingale problem. We will see below that this process governs the equilibrium fluctuations of the density of particles of our model.  In spite of having a dependence of  $\mathcal{Y}_t$ on $\theta$,  we do not index on it  to not overload notation.  Denote by $\mathbf Q_\rho^{\theta}$ the distribution of 
   $\mathcal{Y}_\cdot$ and $\mathbf E_\rho^{\theta}$ the expectation with respect to $\mathbf Q_\rho^{\theta}$.

 Define the inner product between the  functions $f,g:[0,1]\to\bb R$ by
\begin{equation*}
\<f,\,g\>_{L^{2,\theta}_{\rho}}=2\chi(\rho)\Bigg[\int_{0}^1f(u)\,g(u) \,du+\Big(f(0)g(0)+ f(1)g(1)\Big)\textbf{1}_{\theta=1}\Bigg], 
\end{equation*}
where $\textbf{1}_{\cdot}$ is the indicator function.
  Then,  
$L^{2,\theta}_{\rho}([0,1])$ is the space of functions $f:[0,1]\rightarrow{\mathbb{R}}$ with $\|f\|_{L^{2,\theta}_{\rho}}<\infty$, where
\begin{equation}\label{norma}
\|f\|_{L^{2,\theta}_{\rho}}^2=\<f,\,f\>_{L^{2,\theta}_{\rho}}.
\end{equation}

\begin{proposition}\label{pp1}
There exists an unique random element $\mc Y$ taking values in the space $\mc C([0,T],\mathcal{S}'_{\theta})$ such
that:
\begin{itemize}
\item[i)] For every function $f \in \mathcal{S}_{\theta}$, $\mc M_t(f)$ and $\mc N_t(f)$ given by
\begin{equation}\label{lf1}
\begin{split}
&\mc M_t(f)= \mc Y_t(f) -\mc Y_0(f) -  \int_0^t \mc Y_s(\Delta_\theta f)ds\,,\\
&\mc N_t(f)=\big(\mc M_t(f)\big)^2 - 2\chi(\rho) \; t\,\|\nabla_\theta f\|_{L^{2,\theta}_{\rho}}^2
\end{split}
\end{equation}
are $\mc F_t$-martingales, where for each $t\in{[0,T]}$, $\mc F_t:=\sigma(\mc Y_s(f); s\leq t,  f \in \mathcal{S}_{\theta})$.

\item[ii)] $\mc Y_0$ is a Gaussian field of mean zero and covariance given on $f,g\in{\mathcal{S}_{\theta}}$ by
\begin{equation}\label{eq:covar1}
\mathbf{E}_\rho^\theta\big[ \mc Y_0(f) \mc Y_0(g)\big] =  \<f,\,g\>_{L^{2,\theta}_{\rho}}
\end{equation}
\end{itemize}
Moreover, for each $f\in\mc S_\theta$, the stochastic process $\{\Y_t(f)\,;\,t\geq 0\}$ is  gaussian, being the
distribution of $\Y_t(f)$  conditionally to
$\mc F_s$, for $s<t$, normal  of mean $\Y_s(T_{t-s}^\theta f)$ and variance $\int_0^{t-s}\Vert \nabla_\theta T_{r}^\theta
f\Vert^2_{L^{2,\theta}_{\rho}}\,dr$, where $T_t^\theta $ was given in Definition \ref{def2}.
\end{proposition}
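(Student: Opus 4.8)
The plan is to follow the standard martingale-problem characterization of generalized Ornstein–Uhlenbeck processes, as in \cite{HS,kl}, and to split the argument into existence and uniqueness, with uniqueness being the substantive part. For existence I would either give the explicit mild construction or invoke the particle system. In the explicit route, realize $\mathcal Y_0$ as a Gaussian field on $\mathcal S_\theta$ with covariance \eqref{eq:covar1}, take an independent cylindrical noise, and define the candidate by the mild formula $\mathcal Y_t(f)=\mathcal Y_0(T^\theta_t f)+\int_0^t \nabla_\theta T^\theta_{t-s}f\cdot d\mathcal W_s$, using the semigroup $T^\theta_t$ from Definition \ref{def2}; one then checks (i)–(ii) and continuity in $\mathcal S'_\theta$ directly from the Fourier representation \eqref{eq6semi}. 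Alternatively, existence is automatic once the convergence of the microscopic density fluctuation field is established, since the tightness of Section \ref{s6} together with the identification of limit points in Section \ref{sec_proof} produces a continuous $\mathcal S'_\theta$-valued process satisfying (i)–(ii). Either way, the heart of the matter is that (i)–(ii) pin down the law uniquely and force the Gaussian transition structure announced at the end of the statement.

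For uniqueness, the first step is to upgrade condition (i) from fixed test functions to \emph{time-dependent} ones. I would fix $t>0$ and $f\in\mathcal S_\theta$ and set $f_s:=T^\theta_{t-s}f$ for $s\in[0,t]$. By Corollary \ref{cor42}, $f_s\in\mathcal S_\theta$ and $\Delta_\theta f_s\in\mathcal S_\theta$ for every $s$, so the martingale relations in (i) apply to each $f_s$. Since the semigroup solves the homogeneous equation, $\partial_s f_s=-\Delta_\theta f_s$, i.e. $(\partial_s+\Delta_\theta)f_s=0$. Approximating the time integral by Riemann sums over a partition of $[0,s]$, using the martingale property of $\mathcal M_\cdot(f_{s_i})$ on each subinterval, and passing to the limit with the continuity of $r\mapsto\mathcal Y_r$ in $\mathcal S'_\theta$ and the smoothness of $r\mapsto f_r$ in the seminorms \eqref{semi-norm}, I would show that
\begin{equation*}
\overline{\mathcal M}^{t,f}_s:=\mathcal Y_s(T^\theta_{t-s}f)-\mathcal Y_0(T^\theta_t f)
\end{equation*}
is an $\mathcal F_s$-martingale on $[0,t]$, whose deterministic predictable quadratic variation is read off from $\mathcal N$ as $\int_0^s\|\nabla_\theta T^\theta_{t-r}f\|^2_{L^{2,\theta}_\rho}\,dr$, up to the normalizing constant fixed by the definition of $\mathcal N_t(f)$.

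The second step is to extract the transition law and close the uniqueness argument. Taking conditional expectations in $\overline{\mathcal M}^{t,f}_s$ between times $s$ and $t$ yields $\mathbf E^\theta_\rho[\mathcal Y_t(f)\mid\mathcal F_s]=\mathcal Y_s(T^\theta_{t-s}f)$, which is the claimed conditional mean. Because $\overline{\mathcal M}^{t,f}$ is a continuous martingale with \emph{deterministic} bracket, Lévy's characterization (after a deterministic time change, equivalently via the complex exponential martingale $\exp\{i\lambda\overline{\mathcal M}^{t,f}_s+\tfrac{\lambda^2}{2}\langle\overline{\mathcal M}^{t,f}\rangle_s\}$) shows its increments are Gaussian; hence $\mathcal Y_t(f)$ is, conditionally on $\mathcal F_s$, normal with mean $\mathcal Y_s(T^\theta_{t-s}f)$ and variance $\int_0^{t-s}\|\nabla_\theta T^\theta_r f\|^2_{L^{2,\theta}_\rho}\,dr$ after the substitution $r=t-u$. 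Iterating this over a finite grid $0\le t_1<\cdots<t_k$ and combining with the Gaussian initial field (ii), every vector $(\mathcal Y_{t_1}(f_1),\dots,\mathcal Y_{t_k}(f_k))$ is jointly Gaussian with mean and covariance computed explicitly from (ii) and the semigroup. Since a law on $\mathcal C([0,T],\mathcal S'_\theta)$ is determined by its finite-dimensional distributions, uniqueness follows.

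The hard part will be the first step: rigorously upgrading the fixed-test-function martingale relation (i) to the time-dependent one for $f_s=T^\theta_{t-s}f$. This is precisely where Corollary \ref{cor42} is indispensable, as it guarantees that $T^\theta_{t-s}f$ and $\Delta_\theta T^\theta_{t-s}f$ remain in $\mathcal S_\theta$ and stay uniformly controlled in every seminorm $\|\cdot\|_k$; this uniform control is what makes the Riemann-sum approximation of $\int_0^s\mathcal Y_r(\Delta_\theta f_r)\,dr$ converge, forces the approximation errors to vanish, and lets me identify the deterministic bracket in the limit. Once this is in place, the Gaussian iteration and the explicit evaluation of the covariance are routine and follow \cite{kl}.
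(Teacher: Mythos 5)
The paper gives no proof of this proposition at all: it simply defers to Holley--Stroock and Kipnis--Landim \cite{HS,kl}. Your argument --- upgrading the martingale relations (i) to the time-dependent test functions $f_s=T^\theta_{t-s}f$ via Corollary \ref{cor42}, obtaining a continuous martingale $\mathcal Y_s(T^\theta_{t-s}f)-\mathcal Y_0(T^\theta_t f)$ with deterministic bracket, and concluding Gaussian transition kernels and hence uniqueness of finite-dimensional distributions --- is precisely the standard proof from those references, and it is correct; existence via either the mild formula or the particle-system limit of Sections \ref{sec_proof}--\ref{s6} is also fine. The only point worth flagging is a normalization inconsistency in the paper itself: the factor $2\chi(\rho)$ is already built into $\|\cdot\|_{L^{2,\theta}_{\rho}}$ in \eqref{norma}, so the extra $2\chi(\rho)$ in the definition of $\mc N_t(f)$ double-counts it (compare Lemma \ref{lemma111}, where the limit is stated as $t\|\nabla_\theta f\|_{L^{2,\theta}_{\rho}}$ with no prefactor); your remark that the bracket is identified ``up to the normalizing constant fixed by the definition of $\mc N_t(f)$'' correctly sidesteps this.
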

 The  random element $\mc Y_\cdot$ is called the generalized Ornstein-Uhlenbeck process of
characteristics $\Delta_\theta$ and $\nabla_\theta$.  From the second equation in \eqref{lf1} and L\'evy's Theorem on the martingale characterization of Brownian motion, for each $f\in\mathcal S_\theta$,  the process
\begin{equation}\label{Bmotion}
 \mc M_t(f)\Big(2\chi(\rho)t\|\nabla_\theta f\|^2_{L^{2,\theta}_{\rho}}\Big)^{-1/2}
\end{equation}
is a standard Brownian motion. Therefore, in view of  Proposition \ref{pp1}, it makes sense to say that $\Y_\cdot$ is
the formal solution of \eqref{OU}.

\subsection{The density fluctuation field}

We define the density fluctuation field $\mc Y^n_\cdot$ as time-trajectory of the linear functional acting on functions $f\in \mathcal S_\theta$ as
\begin{equation}\label{density field}
\mc Y^n_t(f)\;=\;\frac{1}{\sqrt{n}}\sum_{x=1}^{n-1}f\Big(\frac{x}{n}\Big)\Big(\eta_{tn^2}(x)-\rho^n_t(x)\Big),\quad \mbox{ for all }t\geq 0,
\end{equation}
where $\rho^n_t$ was defined in \eqref{mean}. Our results are given for the case $\alpha=\beta=\rho$ and for $\mu_n$ being equal to $\nu^n_\rho$,  that is, the Bernoulli product measure with parameter $\rho\in (0,1)$, so that $\rho^n_t(x)=\rho,$ for all $x\in \Sigma_n$ and $t\geq 0$. Let $\mc Q_\rho^{\theta,n}$ be the probability measure on $\mc D([0,T],\mc S'_{\theta})$  induced by the density fluctuation field $\Y^n_{\cdot}$.
 We denote ${E}_\rho^{\theta,n}$ the expectation with respect to $\mc Q_\rho^{\theta,n}$. Moreover, 
since  we will  consider  only the initial measure $\mu_n$ as $\nu_\rho^n$, we will simplify the notations $\bb P_{\nu_\rho^n}^{\theta,n}$ and $\bb E_{\nu_\rho^n}^{\theta,n}$ as $\bb P_{\rho}^{\theta,n}$ and $\bb E_{\rho}^{\theta,n}$, respectively. 

Our main result is the following theorem.

\begin{theorem}[Ornstein-Uhlenbeck limit]\label{OU_limit}
\quad

For $\alpha=\beta=\rho\in(0,1)$, if we take the initial measure to be $\nu_\rho^n$, namely, the Bernoulli product measure with parameter $\rho$,
then, the sequence $\{\mc Q_\rho^{\theta,n}\}_{ n\in \mathbb{N}}$ converges, as $n\to\infty$, to a generalized Ornstein-Uhlenbeck (O.U.) process, which is the formal solution of  equation \eqref{OU}. As a consequence, the variance of the limit field $\mathcal{Y}_t$ is given on $f\in{\mc S_\theta}$ by
\begin{equation}\label{covariance_local_gibbs}
\mathbf{E}_\rho^\theta\,[\mathcal{Y}_t(f)\mathcal{Y}_s(f)]\;=\;\chi(\rho)\int_0^1 \,(f(u))^2\,du+\int_0^s\| T_{t-r}^\theta f\|^2_{L^{2,\theta}_{\rho}}dr\,,
\end{equation} 
where $\| \cdot\|^2_{L^{2,\theta}_{\rho}}$ was defined in \eqref{norma}.

\end{theorem}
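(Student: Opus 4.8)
The plan is to prove convergence of $\{\mc Q_\rho^{\theta,n}\}_{n}$ to the Ornstein-Uhlenbeck process via the standard route: establish tightness of the sequence in $\mc D([0,T],\mc S'_\theta)$, and then show that any limit point is a solution of the martingale problem characterized in Proposition \ref{pp1}, which by uniqueness identifies the limit as $\mathbf Q_\rho^\theta$. First I would fix $f\in\mc S_\theta$ and apply Dynkin's formula to the process $\Y^n_t(f)$ driven by the generator $n^2\mc L_n^\theta$. This produces a decomposition
\begin{equation*}
\Y^n_t(f)=\Y^n_0(f)+\int_0^t n^2\mc L_n^\theta\,\Y^n_s(f)\,ds+M^n_t(f),
\end{equation*}
where $M^n_t(f)$ is a martingale. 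The heart of the computation is to identify $n^2\mc L_n^\theta\,\Y^n_s(f)$ with a discrete Laplacian acting on $f$ evaluated against the fluctuation field, plus boundary terms coming from the Glauber dynamics at sites $1$ and $n-1$. The boundary conditions encoded in the definition of $\mc S_\theta$ (Definition \ref{def1}) are chosen precisely so that, after summation by parts, the discrete Laplacian converges to $\Delta_\theta f$ and the boundary contributions either vanish or, in the case $\theta=1$, combine into the Robin-type terms that match $\nabla_\theta$ with the modified inner product $\<\cdot,\cdot\>_{L^{2,\theta}_\rho}$. Since we start from the equilibrium measure $\nu_\rho^n$, the variables $\eta_s(x)-\rho$ are centered and the martingale's quadratic variation simplifies considerably.

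Next I would compute the quadratic variation of $M^n_t(f)$ using the carré-du-champ operator associated to $n^2\mc L_n^\theta$. Both the bulk exchanges and the boundary flips contribute; in equilibrium the bulk term produces, after rescaling, a discrete approximation of $2\chi(\rho)\int_0^1(\nabla_\theta f)^2\,du$, while the boundary flips at sites $1$ and $n-1$ contribute terms of order $n^{1-\theta}$ up to the multiplicative factor from $f(x/n)^2$. For $\theta<1$ the vanishing of $f$ at the endpoints kills these boundary contributions; for $\theta=1$ they survive and supply exactly the extra terms $f(0)^2+f(1)^2$ appearing in $\|\nabla_\theta f\|^2_{L^{2,\theta}_\rho}$; for $\theta>1$ they vanish by the order count. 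In all three cases the quadratic variation converges to $2\chi(\rho)\,t\,\|\nabla_\theta f\|^2_{L^{2,\theta}_\rho}$, matching the second martingale $\mc N_t(f)$ in \eqref{lf1}. Tightness is handled in Section \ref{s6} by the Aldous criterion applied to the real-valued processes $\Y^n_t(f)$ together with a uniform bound controlling the $\mc S'_\theta$-valued trajectories through the seminorms \eqref{semi-norm}; the exponential decay of the semigroup from \eqref{eq6semi} and Corollary \ref{cor42} provide the requisite regularity.

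The main obstacle is closing the martingale problem, and specifically controlling the drift term. The integrand $n^2\mc L_n^\theta\,\Y^n_s(f)$ involves, through the discrete Laplacian at the boundary, occupation variables $\eta_{sn^2}(1)$ and $\eta_{sn^2}(n-1)$ at single sites, which do not directly converge to an expression in the fluctuation field. The resolution is the Replacement Lemma (Lemma \ref{rep_lemma}) proved in Section \ref{sec_rl}, which replaces such local functions by spatial averages over small microscopic boxes, thereby expressing the boundary drift in terms of the field $\Y^n$ evaluated against $\Delta_\theta f$ and $\nabla_\theta f$ in a manner compatible with the boundary conditions of $\mc S_\theta$. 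This replacement is exactly where the equilibrium hypothesis $\mu_n=\nu_\rho^n$ is essential: the estimate relies on the explicit product structure and the spectral gap of the dynamics, and is not available starting from a general initial measure. Once the drift and quadratic variation are identified with those of Proposition \ref{pp1}, and the initial field $\Y^n_0$ is shown to converge to the Gaussian field with covariance \eqref{eq:covar1} by a standard central limit theorem for independent variables under $\nu_\rho^n$, uniqueness of the martingale problem yields the claimed convergence. The covariance formula \eqref{covariance_local_gibbs} then follows by combining the conditional Gaussian structure stated in Proposition \ref{pp1} with the stationarity of the field under $\nu_\rho^n$.
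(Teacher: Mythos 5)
Your overall architecture coincides with the paper's: Dynkin decomposition of $\Y^n_t(f)$, identification of the drift with a discrete Laplacian plus boundary terms, convergence of the quadratic variation to $2\chi(\rho)t\|\nabla_\theta f\|^2_{L^{2,\theta}_\rho}$ with the three-way case analysis in $\theta$, tightness via Mitoma and Aldous, the CLT for the initial field under the product measure, and uniqueness of the martingale problem of Proposition \ref{pp1}. All of that matches Sections \ref{sec_proof} and \ref{s6}.

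The one place where your proposal would fail as written is the mechanism you attribute to the Replacement Lemma. You describe it as replacing the single-site occupation variables $\eta_{sn^2}(1)$, $\eta_{sn^2}(n-1)$ by spatial averages over small microscopic boxes. That is not what Lemma \ref{rep_lemma} does, and the box-average route does not close the estimate here: averaging centered variables over a box of size $\ell$ only gains a factor $\ell^{-1/2}$, so the offending term $\sqrt{n}\,(\eta_s(1)-\rho)$ would become of order $\sqrt{n/\ell}$, which cannot be made small for any $\ell\leq n$. What the paper actually proves is that the full time integral vanishes in $L^2$, i.e.
\begin{equation*}
\bb E_{\rho}^{\theta,n}\Big[\Big(\int_0^t c_n\big(\eta_s(x)-\rho\big)\,ds\Big)^2\Big]\;\leq\; C\,\frac{c_n^2 n^\theta}{n^2}\,,
\end{equation*}
via the Kipnis--Varadhan inequality combined with Young's inequality, where the crucial point is that the variational bound is paid for using the Dirichlet form contribution $I_{0,1}(f,\nu_\rho^n)$ of the boundary \emph{flip} (Glauber) dynamics, whose rate $n^{-\theta}$ is responsible for the factor $n^\theta$ in the bound. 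This is where the equilibrium hypothesis enters (through the reversibility relation \eqref{theta_eta_2} and the explicit ratio $\nu_\rho^n(\eta^1)/\nu_\rho^n(\eta)$), not through a spectral gap or a one-block estimate. You should replace the box-averaging step by this direct Kipnis--Varadhan argument; with that substitution the rest of your proposal goes through as in the paper.
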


\section{Proof of Theorem \ref{OU_limit}}\label{sec_proof}

\subsection{Characterization of limit points}

Fix a test function $f$. By Dynkin's formula, we have that
\begin{equation}\label{martingaleM}
M^n_t(f)=\mc Y_t^n(f)-\mc Y_0(f)-\int_{0}^t(\partial_s+n^2\mc L_n^\theta)\mc Y_s^n(f)ds,
\end{equation}
\begin{equation}\label{martingaleN}
N^n_t(f)=(M_t^n(f))^2-\int_{0}^tn^2\mc L_n^\theta \mc Y_s^n(f)^2-2\mc Y_s^n(f)n^2\mc L_n^\theta \mc Y_s^n(f)ds
\end{equation}
are martingales with respect to the natural filtration $\mc F_t:=\sigma(\eta_s:\, s\leq t)$. To simplify notation we denote  $\Gamma_s^n(f):=(\partial_s+n^2\mc L_n^\theta)\mc Y_s^n(f)$.
A long but elementary computation shows that
\begin{equation}\label{int part of mart}
\begin{split}
\Gamma_s^n(f)=&\frac{1}{\sqrt{n}}\sum_{x=1}^{n-1}\Delta_n f\Big(\tfrac{x}{n}\Big)(\eta_s(x)-\rho)\\
+& \sqrt n \nabla_n^+f(0)(\eta_s(1)-\rho)-\sqrt n \nabla_n^-f({n})(\eta_s(n-1)-\rho)\\
-&\frac{n^{3/2}}{n^\theta}f\Big(\tfrac{1}{n}\Big)(\eta_s(1)-\rho)-\frac{n^{3/2}}{n^\theta}f\Big(\tfrac{n-1}{n}\Big)(\eta_s(n-1)-\rho).
\end{split}
\end{equation}
Above
\begin{equation*}
\Delta_nf(x):=n^2\Big[f\Big(\tfrac{x+1}{n}\Big)+f\Big(\tfrac{x-1}{n}\Big)-2f\Big(\tfrac{x-1}{n}\Big)\Big]\,,
\end{equation*}
\begin{equation*}
\nabla_n^+f(x):=n\Big[f\Big(\tfrac{x+1}{n}\Big)-f\Big(\tfrac{x}{n}\Big)\Big]
\end{equation*}
and
\begin{equation*}
\nabla_n^-f(x):=n\Big[f\Big(\tfrac{x}{n}\Big)-f\Big(\tfrac{x-1}{n}\Big)\Big]\,.
\end{equation*}

We note that for the choice  $\theta=0$, using the fact  that $f(0)=0=f(1)$, the expression \eqref{int part of mart} reduces to
\begin{equation}\label{gamma_eq}
\Gamma_s^n(f)=\frac{1}{\sqrt{n}}\sum_{x=1}^{n-1}\Delta_n f\Big(\tfrac{x}{n}\Big) (\eta_s(x)-\rho)\,,
\end{equation}
which is $\mc Y_s^n(\Delta_n f).$

Now, we close the equation \eqref{int part of mart} for each regime of $\theta$. The goal is two show that we can rewrite \eqref{int part of mart} as \eqref{gamma_eq} plus a term which vanishes as $n\to\infty$. 

\quad 

$\bullet$ The case $\theta<1$: we note that since $f\in{\mathcal S_\theta}$ we can write $\Gamma_s^n(f)$ as
\begin{equation*}
\mc Y_s^n(\Delta_n f)+ \sqrt n(1-n^{-\theta})\Big\{ \nabla_n^+f(0)(\eta_s(1)-\rho)- \nabla_n^-f(n)(\eta_s(n-1)-\rho)\Big\}.
\end{equation*}

In order to close the equation for the martingale we need to show that:
\begin{equation}\label{rep_1}
\lim_{n\to\infty}\mathbb{E}_{\rho}^{\theta,n}\big[ \Big(\int_{0}^t\sqrt n\Big(\eta_{s}(x)-\rho\Big) \, ds\Big)^2\Big]=0,\quad\mbox{for } x=1,n-1,
\end{equation}
which is a consequence of Lemma \ref{rep_lemma}, see Remark \ref{rem_rep}.

\quad 

$\bullet$ The case $\theta=1$: we can write $\Gamma_s^n(f)$ as
\begin{equation*}
\begin{split}
&\mc Y_s^n(\Delta_n f)+ \sqrt n \Big( \p_uf(0)-f(0)\Big)(\eta_s(1)-\rho)\\+&\sqrt n \Big(\p_u f(1)+f(1)\Big)(\eta_s(n-1)-\rho)+O\Big(\frac{1}{\sqrt n}\Big).
\end{split}
\end{equation*}
Since $f\in{\mathcal S_\theta}$ the last expression equals to $\mc Y_s^n(\Delta_n f).$ 

\quad 

$\bullet$ The case $\theta>1$: we can repeat the computations above and since $f\in{\mathcal S_\theta}$, in order to close the equation for the martingale term we need to show that
\begin{equation}\label{rep_2}
\lim_{n\to\infty}\mathbb{E}^{\theta,n}_{\rho}\Big[ \Big(\int_{0}^t\frac{ n^{3/2}}{n^\theta}(\eta_s(x)-\rho) \, ds\Big)^2\Big]=0, \quad\mbox{for } x=1,n-1\,,
\end{equation}
which is a consequence of Lemma \ref{rep_lemma}, see Remark \ref{rem_rep}.

From the previous observations, for each regime of $\theta$ we can rewrite \eqref{int part of mart} as \eqref{gamma_eq} plus a negligible term.

\begin{lemma}\label{lemma111}
For all $\theta\geq 0$, $t>0$ and $f\in\mc S_\theta$ it holds that
\begin{equation*}
\lim_{n\to\infty} \mathbb{E}_{\rho}^{\theta,n}[|M_{t}^{n}(f)|^{2}]= t
\|\nabla_\theta f\|_{L^{2,\theta}_{\rho}},
\end{equation*}
where the norm above was defined in \eqref{norma}.
\end{lemma}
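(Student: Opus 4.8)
The plan is to read off $\mathbb{E}_{\rho}^{\theta,n}[(M_t^n(f))^2]$ from the martingale $N_t^n(f)$ of \eqref{martingaleN}. Since $N_0^n(f)=0$ and $N_t^n(f)$ is a martingale, taking expectations gives
\begin{equation*}
\mathbb{E}_{\rho}^{\theta,n}[(M_t^n(f))^2]=\mathbb{E}_{\rho}^{\theta,n}\Big[\int_0^t\Theta_s^n(f)\,ds\Big],\qquad \Theta_s^n(f):=n^2\mc L_n^\theta\big(\mc Y_s^n(f)^2\big)-2\mc Y_s^n(f)\,n^2\mc L_n^\theta\mc Y_s^n(f).
\end{equation*}
The integrand $\Theta_s^n(f)$ is the carré du champ of $n^2\mc L_n^\theta$, which for a pure-jump generator equals the sum over admissible transitions of the jump rate times the squared increment of $\mc Y_s^n(f)$. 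So the first step is to compute these increments: a bulk exchange $\eta\mapsto\sigma^{x,x+1}\eta$ changes the field by $\tfrac{1}{\sqrt n}(\eta(x+1)-\eta(x))\big[f(\tfrac{x}{n})-f(\tfrac{x+1}{n})\big]$, while a boundary flip $\eta\mapsto\eta^x$, $x\in\{1,n-1\}$, produces $\tfrac{1}{\sqrt n}f(\tfrac{x}{n})(1-2\eta(x))$.

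Multiplying each squared increment by its $n^2$-scaled rate and using $(1-2\eta(x))^2=1$, I would obtain
\begin{equation*}
\Theta_s^n(f)=\frac1n\sum_{x=1}^{n-2}(\eta_s(x+1)-\eta_s(x))^2\big(\nabla_n^+f(x)\big)^2+n^{1-\theta}\!\!\sum_{x\in\{1,n-1\}}\!\! f\big(\tfrac{x}{n}\big)^2\big[\rho+\eta_s(x)(1-2\rho)\big],
\end{equation*}
where the boundary factor $n^{1-\theta}$ comes from $n^2\cdot n^{-\theta}\cdot n^{-1}$ and I have used $\alpha=\beta=\rho$. The key simplification is stationarity: because $\nu^n_\rho$ is invariant, $\eta_s$ is distributed as $\nu^n_\rho$ for every $s$, so the time integral contributes a factor $t$ and it remains to evaluate $\mathbb{E}_{\nu^n_\rho}[\Theta_0^n(f)]$. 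Using independence and $\eta(x)^2=\eta(x)$ gives $\mathbb{E}_{\nu^n_\rho}[(\eta(x+1)-\eta(x))^2]=2\chi(\rho)$ and $\mathbb{E}_{\nu^n_\rho}[\rho+\eta(x)(1-2\rho)]=2\chi(\rho)$, whence
\begin{equation*}
\mathbb{E}_{\rho}^{\theta,n}[(M_t^n(f))^2]=2\chi(\rho)\,t\Big[\frac1n\sum_{x=1}^{n-2}\big(\nabla_n^+f(x)\big)^2+n^{1-\theta}\Big(f\big(\tfrac{1}{n}\big)^2+f\big(\tfrac{n-1}{n}\big)^2\Big)\Big].
\end{equation*}

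It remains to pass to the limit. The bulk sum is a Riemann sum which, since $f\in C^\infty([0,1])$ and thus $\nabla_n^+f\to\partial_u f$ uniformly, converges to $\int_0^1(\partial_u f(u))^2\,du=\int_0^1(\nabla_\theta f(u))^2\,du$. The delicate part is the boundary term $n^{1-\theta}(f(\tfrac1n)^2+f(\tfrac{n-1}{n})^2)$, which I would analyze regime by regime using the defining conditions of $\mc S_\theta$: for $\theta<1$ the conditions $f(0)=f(1)=0$ force $f(\tfrac1n)^2=O(n^{-2})$, so the term is $O(n^{-1-\theta})\to 0$; for $\theta>1$ the prefactor $n^{1-\theta}\to 0$ while $f(\tfrac1n)^2$ stays bounded, again giving $0$; and for $\theta=1$ the prefactor equals $1$ and the term converges to $f(0)^2+f(1)^2$. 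In this critical case the boundary relations $\partial_u f(0)=f(0)$ and $\partial_u f(1)=-f(1)$ from Definition \ref{def1} yield $(\nabla_\theta f(0))^2=f(0)^2$ and $(\nabla_\theta f(1))^2=f(1)^2$, so the limit is exactly the $\mathbf{1}_{\theta=1}$ contribution to $\|\nabla_\theta f\|^2_{L^{2,\theta}_{\rho}}$. Collecting the bulk and boundary limits produces $t\,\|\nabla_\theta f\|^2_{L^{2,\theta}_{\rho}}$, which is the claim. The main obstacle is precisely this boundary analysis: establishing the vanishing of the boundary term off the critical value and, at $\theta=1$, matching it to the norm through the boundary conditions defining $\mc S_\theta$; by contrast, the interchange of limit and expectation and the uniform Riemann-sum convergence are routine, since the state space is finite and $f$ is smooth.
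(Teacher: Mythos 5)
Your proposal is correct and follows essentially the same route as the paper: compute the carr\'e du champ of $n^2\mc L_n^\theta$ applied to $\mc Y^n_s(f)$, use stationarity of $\nu^n_\rho$ to reduce the time integral to a factor of $t$ times an explicit expectation, and then analyze the bulk Riemann sum and the boundary term $n^{1-\theta}\big(f(\tfrac1n)^2+f(\tfrac{n-1}{n})^2\big)$ regime by regime exactly as in the paper's proof (including the matching at $\theta=1$ via $\partial_u f(0)=f(0)$, $\partial_u f(1)=-f(1)$). No gaps.
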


\begin{proof}

A simple computation shows that the integral part of the martingale $N_t^n(f)$ can be written as 
\begin{equation*}\label{quad var}
\begin{split}
n^2\mc L_n^\theta \mc Y_s^n(f)^2- 2\mc Y_s^n(f)n^2& \mc L_n^\theta \mc Y_s^n(f)=\frac{1}{n}\sum_{x=1}^{n-2}\Big(\nabla^+_n f\Big(\tfrac{x}{n}\Big)\Big)^2 \Big(\eta_s(x)-\eta_s(x+1)\Big)^2\\
+& \frac{n}{n^\theta}\, \Big(f\big(\pfrac{1}{n}\big)\Big)^2\Big(\rho-2\rho \eta_s(1)+\eta_s(1)\Big)\\
+&\frac{n}{n^\theta}\, \Big(f\big(\pfrac{n-1}{n}\big)\Big)^2\Big(\rho-2\rho \eta_s(n-1)+\eta_s(n-1)\Big),
\end{split}
\end{equation*}
from where we get that 
\begin{equation}\label{exp^2}
\begin{split}
&\mathbb{E}_{\rho}^{\theta,n}\big[|M_{t}^{n}(f)|^{2}\big]\\
&=2\chi( \rho)t\Bigg\{\frac{1}{n}\sum_{x=1}^{n-2}\Big(\nabla^+_n f\Big(\tfrac{x}{n}\Big)\Big)^2 
+\frac{n}{n^\theta}\, \Bigg(\Big(f\big(\pfrac{1}{n}\big)\Big)^2+
 \Big(f\big(\pfrac{n-1}{n}\big)\Big)^2\Bigg)\Bigg\}.
\end{split}
\end{equation}

Let $f\in{\mc{S_\theta}}$. The first term at the right hand side of the previous expression converges to $ 2\chi( \rho)\int_0^1\Big(\nabla_\theta f(u)\Big)^2\,du$, for all $\theta \geq 0$. The second term at the right and side of last expression has to be  analyze for each case of $\theta$ separately:

\quad 

$\bullet$ The case $\theta<1$: since $f(0)=0=f(1)$, the second term at the right hand side of \eqref{exp^2} can be rewritten as $2\chi( \rho)t$ times
$$
\frac{n}{n^\theta}\, \Bigg(\Big(f\big(\pfrac{1}{n}\big)\Big)^2+
 \Big(f\big(\pfrac{n-1}{n}\big)\Big)^2\Bigg)=\frac{1}{n^{1+\theta}}\, \Bigg(\Big(\nabla_n^+f(0)\Big)^2+
 \Big(\nabla_n^-f(n)\Big)^2\Bigg),
$$
which goes to zero as $n\to\infty$.

\quad 

$\bullet$ The case $\theta=1$: the second term at the right hand side of \eqref{exp^2} converges, as $n\to\infty$, to 
$$2\chi( \rho)\Big(f^2\big(0\big)+
 f^2\big(1\big)\Big).
$$ Recalling that $f(0)=\partial_u f(0)$ and $f(1)=-\partial_u f(1)$,  the proof ends.

\quad 

$\bullet$ The case $\theta>1$: since $f\in{\mc{S_\theta}}$ and $\frac{n}{n^\theta}\to 0$, as $n\to\infty$,  the second term at the right hand side of \eqref{exp^2} converges to zero when $n\to\infty$.

\end{proof}
 We have just proved that the quadratic variation of the martingale converges in mean. In the next Lemma below we state the stronger convergence of the martingales to a Brownian motion. 

\begin{lemma}\label{lemma32}
For $f\in\mc S_\theta$, the sequence of martingales $\{M^n_t(f);t\in [0,T]\}_{n\in \bb N}$ converges in the topology of $\mc D([0,T], \bb R)$, as $n\to\infty$, towards a Brownian motion $\mathcal{W}_t(f)$ of  quadratic variation given by 
$ t\|\nabla_\theta f\|_{L^{2,\theta}_{\rho}}$
where $\|\cdot\|_{L^{2,\theta}_{\rho}}$ was defined in \eqref{norma}.
\end{lemma}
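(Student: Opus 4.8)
The plan is to deduce the statement from a martingale central limit theorem (for instance the one in Kipnis--Landim, or Theorem~VIII.3.11 of Jacod--Shiryaev): a sequence of martingales $M^n_\cdot(f)$ with $M^n_0(f)=0$ converges in $\mc D([0,T],\bb R)$ to a Brownian motion of deterministic quadratic variation $t\,\|\nabla_\theta f\|_{L^{2,\theta}_{\rho}}$ as soon as (i) the predictable quadratic variation $\<M^n(f)\>_t$ converges in probability, for each fixed $t$, to the deterministic value $\lim_n \mathbb E_{\rho}^{\theta,n}[|M^n_t(f)|^2]$ identified in Lemma~\ref{lemma111}, and (ii) the maximal jump of $M^n_\cdot(f)$ vanishes. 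By the definition of $N^n_t(f)$ in \eqref{martingaleN}, the predictable quadratic variation is exactly $\<M^n(f)\>_t=\int_0^t\big[n^2\mc L_n^\theta \mc Y_s^n(f)^2-2\mc Y_s^n(f)\,n^2\mc L_n^\theta \mc Y_s^n(f)\big]\,ds$, whose integrand was already computed in the proof of Lemma~\ref{lemma111}.

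Condition (ii) is immediate and deterministic. The absolutely continuous (integral) part of $M^n_t(f)$ contributes no jumps, so a jump arises only from $\mc Y^n_t(f)$ when a Poisson clock rings. A bulk exchange across $\{x,x+1\}$ changes $\mc Y^n_t(f)$ by $\tfrac{1}{\sqrt n}\big(f(\tfrac{x}{n})-f(\tfrac{x+1}{n})\big)\big(\eta(x+1)-\eta(x)\big)$, which is $O(n^{-3/2})$ since $f$ is Lipschitz, while a boundary flip at $x=1$ or $x=n-1$ changes it by at most $\tfrac{1}{\sqrt n}|f(\tfrac{x}{n})|=O(n^{-1/2})$. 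Hence $\sup_{0\le t\le T}|\Delta M^n_t(f)|\le C(f)\,n^{-1/2}$, which gives (ii).

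The substance of the proof is condition (i). Since Lemma~\ref{lemma111} already gives $\mathbb E_\rho^{\theta,n}[\<M^n(f)\>_t]=\mathbb E_\rho^{\theta,n}[|M^n_t(f)|^2]\to t\,\|\nabla_\theta f\|_{L^{2,\theta}_{\rho}}$, it suffices to show that $\<M^n(f)\>_t$ concentrates on its mean, i.e. that its variance vanishes. I would split the integrand, exactly as in \eqref{exp^2}, into the bulk term $\tfrac1n\sum_{x=1}^{n-2}(\nabla_n^+f(\tfrac{x}{n}))^2(\eta_s(x)-\eta_s(x+1))^2$ and the two boundary terms carrying the prefactor $n^{1-\theta}$. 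For the bulk term I would use stationarity together with the product structure of $\nu_\rho^n$: by Cauchy--Schwarz, $\mathrm{Var}\big(\int_0^t\,\cdot\,ds\big)\le t^2\,\mathrm{Var}_{\nu_\rho^n}(\,\cdot\,)$, and since under $\nu_\rho^n$ the covariance of $(\eta(x)-\eta(x+1))^2$ and $(\eta(y)-\eta(y+1))^2$ vanishes unless the two bonds overlap, this variance is $O(1/n)$ and therefore tends to $0$.

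Only the boundary terms remain. For $\theta\neq 1$ their prefactor $n^{1-\theta}(f(\tfrac{1}{n}))^2$ already tends to zero (using $f(0)=0$ when $\theta<1$), so they are negligible in $L^2$. The delicate regime is $\theta=1$, where the boundary integrand converges to $f^2(0)\,g(\eta_s(1))+f^2(1)\,g(\eta_s(n-1))$ with $g(\eta)=\rho+(1-2\rho)\eta$ and $\mathbb E_{\nu_\rho^n}[g]=2\chi(\rho)$; here the single-site variance does \emph{not} vanish, so concentration must come from time averaging rather than from the spatial product structure. This is precisely where the Replacement Lemma enters: the estimate provided by Lemma~\ref{rep_lemma} (in the form used for \eqref{rep_1}--\eqref{rep_2}, see Remark~\ref{rem_rep}) yields $\mathbb E_\rho^{\theta,n}\big[(\int_0^t\sqrt n(\eta_s(x)-\rho)\,ds)^2\big]\to 0$ for $x=1,n-1$, and dividing by $n$ gives the weaker bound $\mathbb E_\rho^{\theta,n}\big[(\int_0^t(\eta_s(x)-\rho)\,ds)^2\big]\to 0$, which forces $\int_0^t g(\eta_s(x))\,ds\to 2\chi(\rho)\,t$ in $L^2$. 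Combining the bulk and boundary estimates gives $\<M^n(f)\>_t\to t\,\|\nabla_\theta f\|_{L^{2,\theta}_{\rho}}$ in $L^2$, hence in probability; since $t\mapsto\<M^n(f)\>_t$ is nondecreasing and the limit is continuous, pointwise convergence in probability upgrades to uniform convergence on $[0,T]$ by a standard Dini-type argument. I expect the boundary self-averaging at $\theta=1$ (equivalently, the invocation of the Replacement Lemma) to be the only genuine obstacle; the bulk variance bound and the jump estimate are routine.
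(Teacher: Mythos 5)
Your proposal is correct and is, in substance, the argument the paper has in mind: the paper's own ``proof'' is a one-line deferral to \cite[page 4170]{fgn3}, citing exactly the two ingredients you use (the convergence of the quadratic variation from Lemma~\ref{lemma111}, plus a standard martingale convergence theorem), so you are essentially supplying the details the authors leave to the reader. The only difference in packaging is that the cited route goes through tightness, uniform integrability and the fact that a limit of uniformly integrable martingales is a martingale, followed by L\'evy's characterization, whereas you invoke the martingale FCLT of Jacod--Shiryaev directly; both reduce to the same two checks, namely vanishing jumps (your $O(n^{-1/2})$ bound is right) and convergence in probability of $\<M^n(f)\>_t$ to the deterministic limit. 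Your identification of the boundary term at $\theta=1$ as the only non-routine point, requiring time-averaging via the Replacement Lemma rather than the spatial product structure, is accurate and is a step the paper glosses over entirely. One small imprecision: at $\theta=1$ the quantity $\bb E_{\rho}^{\theta,n}\big[\big(\int_0^t\sqrt n(\eta_s(x)-\rho)\,ds\big)^2\big]$ does \emph{not} tend to zero --- Lemma~\ref{rep_lemma} with $c_n=\sqrt n$ only gives a bound of order $n^{\theta-1}=O(1)$ there --- so you should invoke the lemma directly with $c_n=1$, which gives the bound $Cn^{\theta-2}=O(n^{-1})$ and yields exactly the $L^2$ self-averaging $\int_0^t(\eta_s(x)-\rho)\,ds\to 0$ that you need; your ``divide by $n$'' step arrives at the same place, but the intermediate claim as written is false.
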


\begin{proof}
We can repeat here the same proof of  \cite[page 4170]{fgn3}, which is based on   
Lemma \ref{lemma111} and the fact that a limit in distribution of a uniformly
integrable sequence of martingales is a martingale. We leave the details to the interested reader. 
\end{proof}  

\subsection{Convergence at initial time}

\begin{proposition} \label{convergence at time zero}

The sequence  $\{\mc Y^n_0\}_{n\in\mathbb{N}}$ converges in distribution to $\mc Y_0$,
where $\mc Y_0$
 is a Gaussian field with mean zero and covariance given by \eqref{eq:covar1}.
\end{proposition}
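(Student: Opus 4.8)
The plan is to exploit the fact that under the equilibrium measure $\nu^n_\rho$ the occupation variables $\{\eta_0(x)\}_{x\in\Sigma_n}$ are independent, each being Bernoulli of parameter $\rho$; hence, for a fixed $f\in\mc S_\theta$, the random variable $\mc Y^n_0(f)=\tfrac{1}{\sqrt n}\sum_{x=1}^{n-1}f(\tfrac xn)(\eta_0(x)-\rho)$ is a normalized sum of independent, centered, uniformly bounded random variables, so the statement is a classical central limit theorem. Since $\mc Y^n_0$ is a random element of the dual $\mc S'_\theta$ of the nuclear space $\mc S_\theta$, I would establish convergence in distribution through the nuclear version of L\'evy's continuity theorem: it is enough to prove that the characteristic functional $\bb E_\rho^{\theta,n}\big[\exp(i\,\mc Y^n_0(f))\big]$ converges, for every $f\in\mc S_\theta$, to a limit that is continuous at $f=0$ in the topology generated by the seminorms \eqref{semi-norm}. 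Equivalently, by the Cram\'er--Wold device together with the linearity $\sum_i c_i\mc Y^n_0(f_i)=\mc Y^n_0\big(\sum_i c_i f_i\big)$ with $\sum_i c_i f_i\in\mc S_\theta$, the whole problem reduces to the one-dimensional convergence of $\mc Y^n_0(g)$ for an arbitrary $g\in\mc S_\theta$.

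For such a fixed $g$, centering gives mean zero, while independence yields
$$\mathrm{Var}\big(\mc Y^n_0(g)\big)=\frac{\chi(\rho)}{n}\sum_{x=1}^{n-1}g\big(\tfrac xn\big)^2\;\xrightarrow[n\to\infty]{}\;\chi(\rho)\int_0^1 g(u)^2\,du\,,$$
the limit being the Riemann sum of the continuous function $g^2$. The Lindeberg--Feller condition is immediate, since each summand is bounded in absolute value by $\|g\|_0/\sqrt n$, which tends to $0$ uniformly in $x$, so the triangular array is infinitesimal. Alternatively one expands the logarithm of the characteristic function as $\sum_{x}\log\bb E_\rho^{\theta,n}\big[\exp\big(i t\,g(\tfrac xn)(\eta_0(x)-\rho)/\sqrt n\big)\big]$ and uses the uniform bound on the increments to obtain $-\tfrac{t^2}{2}\chi(\rho)\int_0^1 g^2\,du+o(1)$. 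Either route shows that $\mc Y^n_0(g)$ converges in distribution to a centered Gaussian of variance $\chi(\rho)\int_0^1 g^2\,du$, so that the characteristic functional converges to $\exp\big(-\tfrac12\chi(\rho)\int_0^1 f^2\,du\big)$, which is continuous in $f$ as required by the continuity theorem.

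Finally, polarizing the quadratic form $g\mapsto\chi(\rho)\int_0^1 g^2\,du$ identifies the limiting covariance as $\mathbf E_\rho^\theta[\mc Y_0(f)\mc Y_0(g)]=\chi(\rho)\int_0^1 f(u)g(u)\,du$, the covariance \eqref{eq:covar1} of the initial field $\mc Y_0$; observe that, the initial law being a product measure with no spatial correlations, the off-diagonal terms vanish and the covariance is carried entirely by the bulk integral, with no boundary contribution. I do not anticipate a genuine obstacle here, as this is the most elementary step of the argument; the only points deserving care are the verification of the continuity hypothesis in the nuclear-space continuity theorem (immediate, since $\int_0^1 f^2\le\|f\|_0^2$) and the uniform-in-$n$ control that legitimizes passing the Riemann sum and the Lindeberg estimate to the limit, both of which are granted by the smoothness and boundedness of the elements of $\mc S_\theta$.
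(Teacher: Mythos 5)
Your proof is correct and follows essentially the same route as the paper's: independence of the occupation variables under the Bernoulli product measure, convergence of the characteristic function (equivalently, Lindeberg--Feller) to identify a centered Gaussian limit with variance $\chi(\rho)\int_0^1 f^2(u)\,du$, and the Cram\'er--Wold device to pass to linear combinations and hence to the field. The only caveat --- shared with the paper's own proof --- is that the covariance you obtain, $\chi(\rho)\int_0^1 f(u)g(u)\,du$, matches \eqref{covariance_local_gibbs} at $t=s=0$ but not the literal normalization of \eqref{eq:covar1}, which carries an extra factor $2$ and, for $\theta=1$, boundary terms; this is an inconsistency in the paper's statement rather than a gap in your argument.
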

\begin{proof}
We first claim that, for every $f\in\mathcal{S}_\theta$ and every $t>{0}$,
\begin{equation*}\label{eq23}
\lim_{n\rightarrow{{+\infty}}}\log 
\,{E}_\rho^{\theta,n}\Big[\exp\{i\lambda{\mathcal{Y}^n_{0}(f)}\}\Big]=-\frac{\lambda^2}{2}
\chi(\rho)\int_{0}^1 f^2(u)\,du\,.
\end{equation*}
Since $\nu^n_\rho$ is a Bernoulli product measure,
\begin{equation*}
\begin{split}
\log\,{E}_\rho^{\theta,n}[\exp\{i\lambda\mathcal{Y}^n_{0}(f)\}]&=
\log\bb{E}_\rho^{\theta}\Big[\exp\Big\{\frac{i\lambda}{\sqrt{n}}\sum
_{x	\in{\Sigma_n}}\; (\eta_0(x)-\rho)\,f\Big(\frac{x}{n}\Big)\Big\}\Big]\\
&=\sum_{x\in{\Sigma_n}}\log\bb {E}_\rho^{\theta}\Big[\exp\Big\{\frac{i\lambda}{\sqrt{n}}\;
(\eta_0(x)-\rho)\,f\Big(\frac{x}{n}\Big)\Big\}\Big]\,.
\end{split}
\end{equation*}
Since $f$ is smooth and using Taylor's expansion, the right hand side of last expression is equal to
\begin{equation*}
-\frac{\lambda^2}{2n}\sum_{x\in{\Sigma_n }}f^{2}\Big(\frac{x}{n}\Big)\chi(\rho)+O\Big(\frac{1}{\sqrt n}\Big)\,.
\end{equation*}
Taking the limit as $n\rightarrow{+\infty}$ and using the continuity of $f$, the proof of the claim ends. Replacing $f$ by a
linear combination of functions and recalling the Cr\'amer-Wold device, the proof finishes.
\end{proof}

\section{Tightness}\label{s6}
Now we prove that the sequence of processes $\{\mc Y_t^n; t \in [0,T]\}_{n \in \bb N}$ is tight. Recall that we have defined the density fluctuation field on test functions $f\in\mc S_\theta$. Since we want to use  Mitoma's criterium   \cite{Mitoma} for tightness,  we need the following property from the space $\mathcal S_\theta$.
\begin{proposition}\label{frechet}
 The space $\mc S_\theta$ endowed with the semi-norms given in \eqref{semi-norm}
is a Fr\'echet space.
\end{proposition}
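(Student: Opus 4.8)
The plan is to invoke the standard fact that a vector space equipped with a countable family of seminorms $\{\|\cdot\|_k\}_{k\in\mathbb{N}\cup\{0\}}$ is automatically a metrizable, locally convex topological vector space, with an inducing metric given by
\[
d(f,g)=\sum_{k=0}^{\infty}2^{-k}\,\frac{\|f-g\|_k}{1+\|f-g\|_k}\,.
\]
Since $\|f\|_0=\sup_{u\in[0,1]}|f(u)|$ vanishes only for $f\equiv 0$, the family separates points and the topology is Hausdorff. Consequently, the entire content of Proposition \ref{frechet} reduces to \textbf{completeness}: every $d$-Cauchy sequence must converge to an element of $\mathcal S_\theta$.

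First I would observe that a sequence $\{f_m\}$ is $d$-Cauchy if and only if it is Cauchy with respect to each individual seminorm $\|\cdot\|_k$; that is, for every $k$ the sequence of $k$-th derivatives $\{\partial_u^k f_m\}_m$ is uniformly Cauchy on $[0,1]$. By completeness of $C([0,1])$ under the uniform norm, for each $k$ there is a continuous function $g_k$ with $\partial_u^k f_m\to g_k$ uniformly. The next step is the classical theorem on interchanging limits and differentiation: if $f_m\to g_0$ uniformly and $\partial_u f_m\to g_1$ uniformly, then $g_0$ is differentiable with $\partial_u g_0=g_1$. Applying this inductively in $k$ yields $g_0\in C^\infty([0,1])$ with $\partial_u^k g_0=g_k$ for all $k$, and by construction $\|f_m-g_0\|_k\to 0$ for every $k$, hence $d(f_m,g_0)\to 0$.

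It then remains to check that the limit $g_0$ lies in $\mathcal S_\theta$, i.e. that it satisfies the boundary conditions of Definition \ref{def1}. Here one uses that uniform convergence $\partial_u^k f_m\to\partial_u^k g_0$ entails convergence of the boundary values $\partial_u^k f_m(0)\to\partial_u^k g_0(0)$ and $\partial_u^k f_m(1)\to\partial_u^k g_0(1)$. Each of the three sets of constraints---namely $\partial_u^{2k}f(0)=\partial_u^{2k}f(1)=0$ for $\theta<1$; the relations $\partial_u^{2k+1}f(0)=\partial_u^{2k}f(0)$ and $\partial_u^{2k+1}f(1)=-\partial_u^{2k}f(1)$ for $\theta=1$; and $\partial_u^{2k+1}f(0)=\partial_u^{2k+1}f(1)=0$ for $\theta>1$---is a closed linear condition on finitely many boundary values of derivatives, hence is preserved under this pointwise passage to the limit. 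Therefore $g_0\in\mathcal S_\theta$, which establishes completeness and the proposition.

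The argument is essentially routine once this framework is set up; the only point requiring genuine care---and thus the main obstacle---is the inductive passage from uniform convergence of all derivative sequences to smoothness of the limit with the correct identification $\partial_u^k g_0 = g_k$, together with the bookkeeping confirming that the $\theta$-dependent boundary relations survive in the limit.
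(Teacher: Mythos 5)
Your proposal is correct and follows essentially the same route as the paper: the paper quotes the fact that $C^\infty([0,1])$ with these seminorms is Fr\'echet and reduces the claim to $\mc S_\theta$ being closed, which holds because the boundary conditions of Definition \ref{def1} are pointwise constraints preserved under uniform (hence pointwise) convergence of all derivatives. You simply unpack the completeness of $C^\infty([0,1])$ explicitly via the classical theorem on interchanging limits and differentiation, but the substance --- completeness plus closedness of the $\theta$-dependent boundary relations --- is identical.
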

\begin{proof}
The definition of a Fr\'echet space can be found, for instance, in \cite{reedsimon}. Since $C^{\infty}([0,1])$   endowed with the semi-norms \eqref{semi-norm}
is a Fr\'echet space, and a closed subspace of a Fr\'echet space is also a Fr\'echet space, it is enough to show that   $\mc S_\theta$ is a closed subspace of $C^\infty([0,1])$, which   is a consequence of the fact that uniform convergence implies point-wise convergence.
\end{proof}
As a consequence of Mitoma's
criterium  \cite{Mitoma} and Proposition \ref{frechet}, the proof of tightness of the $\mc S'_\theta$ valued processes  $\{\mc Y_t^n; t \in [0,T]\}_{n \in \bb N}$ follows from tightness of the sequence of real-valued processes $\{\mc Y_t^n(f); t \in [0,T]\}_{n \in \bb N}$,
for $f\in{\mc S_\theta}$.

\begin{proposition}[Mitoma's criterium,  \cite{Mitoma}]\quad
A sequence  of processes $\{x_t;t \in [0,T]\}_{n \in \bb N}$  in $\mc D([0,T],\mc {S_\theta}')$ is tight with respect to the
Skorohod topology if, and only if, the sequence $\{x_t(f);t \in [0,T]\}_{n \in \bb N}$ of real-valued processes is tight with
respect to the Skorohod topology of $\mc D([0,T], \bb R)$, for any $f \in \mc {S_\theta}$.
\end{proposition}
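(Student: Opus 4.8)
The statement is a general fact about $\mc S'_\theta$-valued processes that does not depend on the exclusion dynamics, so the plan is to deduce it from the abstract theory of tightness in duals of nuclear Fr\'echet spaces, exactly as in \cite{Mitoma}. The first thing I would record is that $\mc S_\theta$ is not merely Fr\'echet (Proposition \ref{frechet}) but \emph{nuclear}: it is a closed subspace of $C^\infty([0,1])$, which is a nuclear Fr\'echet space, and nuclearity passes to closed subspaces. This is the structural hypothesis that makes the criterion true, and it is the only thing one must check in our concrete setting.

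The necessity (only if) direction is immediate. For fixed $f\in\mc S_\theta$ the evaluation $\pi_f:\mc S'_\theta\to\bb R$, $\pi_f(T)=T(f)$, is linear and continuous, hence it induces a continuous map $\mc D([0,T],\mc S'_\theta)\to\mc D([0,T],\bb R)$ between the associated Skorohod spaces (composition with a continuous map preserves the $J_1$ topology). Since the image of a tight sequence under a continuous map is tight, tightness of $\{x_t\}_{n}$ forces tightness of each coordinate sequence $\{x_t(f)\}_{n}$.

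The content lies in the sufficiency (if) direction, and this is where nuclearity enters. By the nuclear structure one may generate the topology of $\mc S_\theta$ by an increasing family of Hilbertian seminorms $\|\cdot\|_p$ whose local Hilbert spaces $H_p$ satisfy Hilbert--Schmidt inclusions $H_{p+1}\hookrightarrow H_p$; dually $\mc S'_\theta=\bigcup_p H_{-p}$ with \emph{compact} inclusions $H_{-p}\hookrightarrow H_{-p-1}$. Expanding distributions in the orthonormal basis of eigenfunctions $\{\Psi_n\}$ appearing in \eqref{eq6semi}, the $H_{-p}$-norm of $x_t$ is controlled by the scalar quantities $x_t(\Psi_n)$ weighted by powers of the eigenvalues. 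One then invokes a two-part tightness criterion in $\mc D([0,T],\mc S'_\theta)$: (i) \emph{compact containment}, i.e.\ for each $t$ the laws of $x_t$ concentrate, uniformly in $n$, on a compact subset of $\mc S'_\theta$; and (ii) control of the modulus of continuity of the trajectories. Part (i) follows because one-dimensional tightness of each $\{x_t(\Psi_n)\}_{n}$ provides, through the Hilbert--Schmidt summability coming from nuclearity, a uniform bound placing $x_t$ in a ball of $H_{-p}$ that is compact in $H_{-p-1}\subset\mc S'_\theta$; part (ii) follows by applying an Aldous-type oscillation estimate coordinatewise and summing the contributions, again using nuclear summability to keep the sum finite.

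The main obstacle is precisely this sufficiency direction: passing from tightness of \emph{each} scalar coordinate to tightness of the infinite-dimensional $\mc S'_\theta$-valued process. Coordinatewise tightness alone is far too weak; what rescues the argument is that nuclearity permits one to control all coordinates simultaneously, to pack them into a single compact set of the dual, and to keep the joint modulus of continuity small. Since this reduction is purely functional-analytic and independent of the model, I would carry it out once and for all by citing \cite{Mitoma}, having only to verify its hypothesis---the nuclearity of $\mc S_\theta$---which we obtained above from Proposition \ref{frechet} together with the nuclearity of $C^\infty([0,1])$.
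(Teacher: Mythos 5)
Your proposal is correct and ultimately follows the same route as the paper: the proposition is imported from \cite{Mitoma}, and the only model-specific work is to verify that $\mc S_\theta$ has the structure required by that theorem, which the paper addresses in Proposition \ref{frechet}. Your observation that Mitoma's criterion actually requires \emph{nuclearity} of $\mc S_\theta$ (not merely the Fr\'echet property), and that this is inherited because $\mc S_\theta$ is a closed subspace of the nuclear space $C^\infty([0,1])$, is a worthwhile sharpening of what Proposition \ref{frechet} records; your sketch of the sufficiency direction is a fair summary of the cited argument, with the caveat that the passage from coordinatewise tightness to a uniform compact-containment bound in $H_{-p}$ is more delicate than ``summing the contributions'' suggests (the tightness constants of the individual coordinates are not a priori summable), and that delicate step is exactly what the citation to \cite{Mitoma} supplies.
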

Now, to show tightness of the real-valued process we use the Aldous' criterium:
\begin{proposition}
 A sequence $\{x_t; t\in [0,T]\}_{n \in \bb N}$ of real-valued processes is tight with respect to the Skorohod topology of $\mc
D([0,T],\bb R)$ if:
\begin{itemize}
\item[i)]
$\displaystyle\lim_{A\rightarrow{+\infty}}\;\limsup_{n\rightarrow{+\infty}}\;\mathbb{P}_{\mu_n}\Big(\sup_{0\leq{t}\leq{T}}|x_{t
} |>A\Big)\;=\;0\,,$

\item[ii)] for any $\varepsilon >0\,,$
 $\displaystyle\lim_{\delta \to 0} \;\limsup_{n \to {+\infty}} \;\sup_{\lambda \leq \delta} \;\sup_{\tau \in \mc T_T}\;
\mathbb{P}_{\mu_n}(|
x_{\tau+\lambda}- x_{\tau}| >\varepsilon)\; =\;0\,,$
\end{itemize}
where $\mc T_T$ is the set of stopping times bounded by $T$.
\end{proposition}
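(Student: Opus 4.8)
The statement is Aldous' classical tightness criterion, so rather than reprove a standard result I would recall the structure of its proof and indicate how it is used here. The plan is to reduce tightness in $\mc D([0,T],\bb R)$ to the two listed conditions by means of Prohorov's theorem together with the characterization of relatively compact subsets of the Skorohod space.

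First I would recall that, by Prohorov's theorem, tightness of $\{x_t\}_{n\in\bb N}$ is equivalent to the existence, for every $\eta>0$, of a compact set $K_\eta\subset\mc D([0,T],\bb R)$ with $\mathbb P_{\mu_n}(x_\cdot\notin K_\eta)\le\eta$ uniformly in $n$. By the Arzel\`a--Ascoli-type description of compactness in the Skorohod topology, such $K_\eta$ are controlled by two quantities: a uniform bound $\sup_{0\le t\le T}|x_t|\le A$ and the smallness of the Skorohod modulus of continuity $w'(x_\cdot,\delta)$. Condition i) supplies exactly the first ingredient, keeping the whole trajectory inside a fixed interval $[-A,A]$ up to an event of arbitrarily small probability, uniformly in $n$.

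The heart of the argument is to deduce control of $w'(x_\cdot,\delta)$ from condition ii), which is phrased in terms of increments over stopping-time intervals. Here I would invoke Aldous' lemma, bounding $\mathbb P_{\mu_n}(w'(x_\cdot,\delta)>\varepsilon)$ from above by a finite sum of probabilities of the form $\mathbb P_{\mu_n}(|x_{\tau+\lambda}-x_\tau|>\varepsilon')$ with $\tau\in\mc T_T$ and $\lambda\le\delta$; taking $\limsup_n$ and then $\delta\to0$ and invoking ii) forces these terms to vanish. Combining this with the uniform bound from i) produces the compact sets $K_\eta$, and Prohorov's theorem finishes the proof.

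The main obstacle is precisely this passage from the random-time increment condition ii) to the pathwise oscillation estimate on $w'$: one must partition $[0,T]$ adaptively, replacing deterministic oscillation windows by stopping-time increments and controlling the number of large fluctuations through the optional stopping structure. Since the result is classical, in the paper it suffices to cite standard references such as \cite{kl}; in our setting it will be applied with $x_\cdot=\mc Y^n_\cdot(f)$, where condition i) follows from an $L^2$ bound on the field together with Doob's inequality, and condition ii) from the martingale decomposition \eqref{martingaleM} and the quadratic-variation estimate of Lemma \ref{lemma111}.
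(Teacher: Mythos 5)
Your proposal is correct and consistent with the paper's treatment: the paper states Aldous' criterion as a classical result and gives no proof, and your outline (Prohorov's theorem, the compactness characterization of $\mc D([0,T],\bb R)$ via a uniform sup bound plus the modulus $w'$, and Aldous' key lemma converting the stopping-time increment condition ii) into control of $w'$) is a faithful sketch of the standard argument found in the cited references. Nothing further is needed.
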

Fix $f\in{\mc S_\theta}$. By \eqref{martingaleM}, it is enough to prove tightness of $\{\mc Y_0^n(f)\}_{n \in
\bb N}$, $\{ \int_{0}^t\Gamma_s^n(f)\, ds; t \in [0,T]\}_{n \in \bb N}$, and $\{\mc M_t^n(f
); t \in [0,T]\}_{n \in \bb N}$. 
\subsection{Tightness at the initial time}
This follows from Proposition \ref{convergence at time zero}.

\subsection{Tightness of the martingales}
By Lemma \ref{lemma32}, the sequence of martingales converges. In particular, it is tight.

\subsection{Tightness of the integral terms}

The first claim of Aldous' criterium can be easily checked for the integral term $\int_{0}^t\Gamma_s^n(f)\, ds$, where the expression for $\Gamma_s^n(f)$ can be found in \eqref{int part of mart}. Let $f\in{\mc{S_\theta}}$.
\begin{itemize}
\item The case  $\theta<1$: by Young's inequality and Cauchy-Schwarz's inequality we have that
\begin{equation*}
\begin{split}
{E}_{\rho}^{\theta,n}\Big[\sup_{t\leq {T}}\Big(&\int_{0}^t\Gamma_s^n(f)\, ds\Big)^2\Big]\\&\leq CT \int_{0}^T \mathbb{E}^{\theta,n}_{\rho}\Big[\Big(\frac{1}{\sqrt{n}}\sum_{x=1}^{n-1}\Delta_n f(\tfrac{x}{n})(\eta_{sn^2}(x)-\rho)\Big)^2\Big]\, ds\\
&+C\,(\nabla_n^+f(0))^2\,T \int_{0}^T \mathbb{E}^{\theta,n}_{\rho}\Big[\Big(\sqrt{n}(\eta_{sn^2}(1)-\rho)\Big)^2\Big]\, ds\\
&+C\,(\nabla_n^-f({1}))^2\,T \int_{0}^T \mathbb{E}^{\theta,n}_{\rho}\Big[\Big(\sqrt{n}(\eta_{sn^2}(n-1)-\rho)\Big)^2\Big]\, ds.
\end{split}
\end{equation*}
Since $f\in\mc S_\theta$ and by  \eqref{rep_1}, the second and third terms at the right hand side of the previous expression go to zero, as $n\to\infty$. Then there exists $C>0$ such that these two terms are bounded from above by $CT$.
The first term at the right hand side of last expression is bounded from above by $T^2$ times
\begin{equation}\label{estimate111}
\frac{1}{{n}}\sum_{x=1}^{n-1}\big(\Delta_n f(\tfrac{x}{n})\Big)^2\chi(\rho)
\,.
\end{equation}
Now, since $f\in\mc S_\theta$ last expression is bounded from above by a constant. Now we need to check the second claim of Aldous' criterium. For that purpose,
fix a stopping time $\tau \in \mc T_T$. By Chebychev's inequality together  with \eqref{estimate111}, we get that
 \begin{equation*}
{P}_{\rho}^{\theta,n}\Big(\Big|  \int_{\tau}^{\tau+\lambda}\!\!\Gamma^n_s(f)\, ds\;\Big| >\varepsilon\Big)
	\leq \frac{1}{\varepsilon^2} {E}_{\rho}^{\theta,n}\Big[ \Big(  \int_{\tau}^{\tau+\lambda}\!\!\Gamma^n_s(f)\; ds \;\Big)^2\Big]
	\leq \frac{\delta C}{\varepsilon^2}\,,
\end{equation*}
which vanishes as $\delta\rightarrow{0}$.

\quad 

\item The case $\theta=1$: we note that it was treated in  \cite{fgn4}.

\quad 

\item  The case $\theta>1$: as in the case $\theta<1$, we have that 
\begin{equation*}
\begin{split}
{E}_{\rho}^{\theta,n}\Big[\sup_{t\leq {T}}\Big(&\int_{0}^t\Gamma_s^n(f)\, ds\Big)^2\Big]\\&\leq CT \int_{0}^T \mathbb{E}^{\theta,n}_{\rho}\Big[\Big(\frac{1}{\sqrt{n}}\sum_{x=1}^{n-1}\Delta_n f(\tfrac{x}{n})(\eta_{sn^2}(x)-\rho)\Big)^2\Big]\, ds\\
&+C\,f^2\Big(\tfrac{1}{n}\Big)\,T \int_{0}^T \mathbb{E}^{\theta,n}_{\rho}\Big[\Big(\frac{n^{3/2}}{n^\theta}(\eta_{sn^2}(1)-\rho)\Big)^2\Big]\, ds\\
&+C\,f^2\Big(\tfrac{n-1}{n}\Big)\,T \int_{0}^T \mathbb{E}^{\theta,n}_{\rho}\Big[\Big(\frac{n^{3/2}}{n^\theta}(\eta_{sn^2}(n-1)-\rho)\Big)^2\Big]\, ds\,,
\end{split}
\end{equation*}
plus a term of order $\frac{1}{\sqrt{n}}$. To bound the first term at the right hand side of the previous  inequality  we repeat  the same computations  as in the case $\theta<1$. In order to bound the second and the  third terms  at the right hand side of  the previous  inequality, we use \eqref{rep_2} and  the proof follows  as in the case $\theta<1$.

\end{itemize}

\section{Replacement Lemma} \label{sec_rl}

This section is devoted to estimate the expectations \eqref{rep_1} and \eqref{rep_2}. In order to do this we start introducing some notations.
Let $\mu$ be an initial  measure. For $x=0,1,\dots,n-1$, define
\begin{equation*}
I_{x,x+1}(f,\mu)\;:=\; \int r_{x,x+1}(\eta)\left( {f(\sigma^{x,x+1}\eta)}-{f(\eta)}\right)^{2} d\mu\,,
\end{equation*}
where  $\sigma^{x,x+1}\eta$ was defined in \eqref{sigma}, for  $x=1,...,n-2$,
$\sigma^{0,1}\eta:=\eta^{1}$, $\sigma^{n-1,n}\eta:=\eta^{n-1}$ (the configurations $\eta^{1}$ and $\eta^{n-1}$ were defined in \eqref{split}), and the rates are given by 
\begin{equation*}
\begin{split}
r_{0,1}(\eta)\;& :=\;r_{\alpha}(\eta):=\frac{\alpha}{n^\theta}(1-\eta(1)) + \frac{1-\alpha}{n^\theta}\eta(1)\,,\\
r_{n-1,n}(\eta)&\;:=\;r_{\beta}(\eta):=\frac{\beta}{n^\theta}(1-\eta(n-1)) +\frac{1-\beta}{n^\theta}\eta(n-1)\,,\\
r_{x,x+1}(\eta)&\;:=\;1,\; \text{ if } x=1,\cdots,n-2\,.
\end{split}
\end{equation*}
Define the quantity:
\begin{equation}\label{Dn}
\mathcal D_{n}(f,\mu)\;:=\sum_{x=0}^{n-1}I_{x,x+1}(f,\mu)\;= \sum_{x=0}^{n-1}\int r_{x,x+1}(\eta)\left( {f(\sigma^{x,x+1}\eta)}-{f(\eta)}\right)^{2} d\mu\,.
 \end{equation}
 
The Dirichlet form is defined by  $\langle -\mathcal L_{n}^\theta{f},{f} \rangle_{\mu} $, where we can rewrite for short the infinitesimal generator as
\begin{eqnarray*}
\mathcal L_{n}^\theta f(\eta)\;:=\;\sum_{x=0}^{n-1}L_{x,x+1}f(\eta)\;:=\;\sum_{x=0}^{n-1}r_{x,x+1}(\eta)(f(\sigma^{x,x+1}\eta)-f(\eta))\,.\end{eqnarray*}

 Now, we recall that we consider the case  $\alpha=\beta=\rho\in{(0,1)}$, so that the measure $\nu_\rho^n$ (the Bernoulli product measure) is invariant  for this process and it satisfies
  \begin{equation}\label{theta_eta_2}r_{x,x+1}(\eta)\,\nu_\rho^n(\eta)=r_{x,x+1}(\sigma^{x,x+1}\eta)\,
\nu_\rho^n(\sigma^{x,x+1}\eta)\,, \end{equation} for all $x\in\{0,1,\dots,n-1\}$.
Let us check this equality in the case $x=0$, the case $x=n-1$ is similar and the others  are also very simple to check. Note that
  \begin{equation}\label{theta_eta_1}r_{0,1}(\sigma^{0,1}\eta)\,
\frac{\nu_\rho^n(\sigma^{0,1}\eta)}{\nu_\rho^n(\eta)}=
\Big[\frac{\rho}{n^\theta}(1-\eta^1(1)) + \frac{1-\rho}{n^\theta}\eta^1(1)\Big]\frac{\nu_\rho^n(\eta^1)}{\nu_\rho^n(\eta)}. \end{equation} Since
  \begin{equation}\label{theta_eta}
 \frac{\nu_\rho^n(\eta^1)}{\nu_\rho^n(\eta)}=\textbf{1}_{\eta(1)=1}\frac{1-\rho}{\rho}+\textbf{1}_{\eta(1)=0}\frac{\rho}{1-\rho}\,,
 \end{equation}
 then  \eqref{theta_eta_1} becomes
$$\textbf{1}_{\eta(1)=1}\;\Big[\frac{\rho}{n^\theta}\Big]\;\frac{1-\rho}{\rho}\; + \textbf{1}_{\eta(1)=0}\;\Big[\frac{1-\rho}{n^\theta}\Big]\;\frac{\rho}{1-\rho}\;=r_{0,1}(\eta)\,.$$

 Thus, using \eqref{theta_eta_2}, we get
 \begin{equation}\label{eee}\langle -\mathcal L_{n}^\theta{f},{f} \rangle_{\nu_\rho^n} =\frac{1}{2}\mathcal D_{n}(f,\nu_\rho^n)\,.\end{equation}


\begin{lemma}[Replacement Lemma]\label{rep_lemma}
Let  $x=1,n-1$ and $t>0$ fixed. Then
\begin{equation*}
\bb E_{\rho}^{\theta,n}\Big[\Big(\int_0^t c_n\big(\eta_s(x)-\rho\big)\, ds\Big)^2\Big]\;\leq\;  C\frac {c_n^2n^\theta}{n^2}.
\end{equation*}

\end{lemma}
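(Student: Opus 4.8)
The plan is to bound the expectation using the standard $H_{-1}$ estimate for additive functionals of reversible Markov processes, reducing everything to a computation of the Dirichlet form. First I would invoke the classical bound (see \cite{kl}) which states that for a mean-zero observable and a reversible process with generator $n^2\mc L_n^\theta$ started from its invariant measure $\nu_\rho^n$,
\begin{equation*}
\bb E_{\rho}^{\theta,n}\Big[\Big(\int_0^t V(\eta_s)\, ds\Big)^2\Big]\;\leq\; C\,t\,\sup_{g}\Big\{2\<V,g\>_{\nu_\rho^n}-n^2\<-\mc L_n^\theta g,g\>_{\nu_\rho^n}\Big\},
\end{equation*}
where the supremum runs over functions $g$ in $L^2(\nu_\rho^n)$. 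Here I take $V(\eta)=c_n(\eta(x)-\rho)$ for $x=1$ or $x=n-1$. Using \eqref{eee}, the Dirichlet-form term becomes $\tfrac{n^2}{2}\mc D_n(g,\nu_\rho^n)$, so the task is to estimate the variational expression $\sup_g\{2\<V,g\>_{\nu_\rho^n}-\tfrac{n^2}{2}\mc D_n(g,\nu_\rho^n)\}$.

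Next I would focus on the case $x=1$ (the case $x=n-1$ being symmetric) and bound the linear term $\<V,g\>_{\nu_\rho^n}=c_n\<\eta(1)-\rho,\,g\>_{\nu_\rho^n}$. The key algebraic identity is that the observable $\eta(1)-\rho$ can be written in terms of the flip operation at site $1$: since $\nu_\rho^n$ is product Bernoulli, one has $\<\eta(1)-\rho,g\>_{\nu_\rho^n}=\tfrac12\int r_{0,1}(\eta)(\eta(1)-\rho)\,\big(g(\eta)-g(\eta^1)\big)\,d\nu_\rho^n\cdot(\text{const}\cdot n^\theta)$, exploiting the reservoir rate $r_{0,1}$ which carries the factor $n^{-\theta}$. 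Concretely, I would multiply and divide by $r_{0,1}(\eta)=O(n^{-\theta})$ so as to introduce the gradient $g(\eta^1)-g(\eta)$ that appears in $I_{0,1}(g,\nu_\rho^n)$. Then applying Young's inequality $2ab\le A a^2+A^{-1}b^2$ with a well-chosen parameter $A$, the linear term is bounded by $\tfrac{n^2}{2}I_{0,1}(g,\nu_\rho^n)$ (which is absorbed into the Dirichlet form) plus a remainder proportional to $c_n^2$ times the reciprocal of the rate, namely $c_n^2\,n^\theta/n^2$. This is exactly where the factor $n^\theta$ and the denominator $n^2$ in the claimed bound arise: the slow boundary rate $n^{-\theta}$ appears inverted after the Young step.

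The main obstacle I anticipate is arranging the Young's-inequality balancing so that the only surviving Dirichlet-form contribution is the boundary bond $I_{0,1}$ (with its small rate $r_{0,1}$), rather than bulk bonds; one must express $\eta(1)-\rho$ purely through the flip at site $1$ without leaking into exchange terms in the bulk, and verify that the product structure of $\nu_\rho^n$ makes the cross terms vanish. Once the linear functional is written using only the flip at site $1$, the remaining computation is routine: the Young parameter $A$ is tuned to make the $\tfrac{n^2}{2}I_{0,1}$ piece cancel against the Dirichlet form, leaving the remainder of order $c_n^2 n^\theta/n^2$ after accounting for the factor $t$ and the variance $\chi(\rho)$. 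Summing the contributions from $x=1$ and $x=n-1$ and collecting constants into $C$ then yields the stated estimate $\bb E_{\rho}^{\theta,n}[(\int_0^t c_n(\eta_s(x)-\rho)\,ds)^2]\le C\,c_n^2 n^\theta/n^2$.
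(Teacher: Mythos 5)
Your proposal is correct and follows essentially the same route as the paper: the Kipnis--Varadhan variational bound, rewriting $\int c_n(\eta(1)-\rho)f\,d\nu_\rho^n$ via the flip $\eta\mapsto\eta^1$ (using the product structure of $\nu_\rho^n$), multiplying and dividing by the boundary rate $r_{0,1}(\eta)\asymp n^{-\theta}$, and tuning the Young parameter so the resulting $I_{0,1}$ term is absorbed into $n^2\langle-\mc L_n^\theta f,f\rangle_{\nu_\rho^n}$, leaving a remainder of order $c_n^2 n^\theta/n^2$. The only cosmetic caveat is that your displayed ``identity'' involving $r_{0,1}(\eta)\cdot(\mathrm{const}\cdot n^\theta)$ is really a two-sided bound rather than an equality, but you correct this yourself in the next sentence, exactly as the paper does.
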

\begin{remark}\label{rem_rep}
Recall that for $\theta<1$ we have in \eqref{rep_1} $c_n=\sqrt n$, so that the error above becomes $n^\theta/n$, which vanishes as $n\to\infty$. 
Recall that for $\theta>1$ we have in \eqref{rep_2} $c_n=n^{3/2}/n^\theta$, so that the error above becomes $n/n^\theta$, which vanishes as $n\to\infty$. 
\end{remark}
\begin{proof}
The proof follows by a classical argument combining both the Kipnis-Varadhan's inequality (see \cite[page 33, Lemma 6.1]{kl}) with Young's inequality. For that purpose let $x=1$ (the other case is completely analogous) and  note that the expectation in the statement of the lemma can be bounded from above by
\begin{equation}\label{kv}
 \sup_{f\in L^2_{\nu^n_\rho}}\Big\{ \int c_n(\eta(1)-\rho)f(\eta)\,d\nu_\rho^n+ {n^2}\langle \mathcal  L_{n}^\theta{f},{f} \rangle_{\nu_\rho^n}\Big\},
\end{equation}
where $L^2_{\nu^n_\rho}$ is the space of functions $f$ such that $\int f^2(\eta)\,d\nu_\rho^n<+\infty$.
We start by writing the integral    $\int c_n(\eta(1)-\rho)f(\eta)d\nu_\rho^n $ as twice its half and in one of the terms we make the exchange $\eta\to\eta^1$ to have 
\begin{equation*}
 \frac {1}{2}\int c_n(\eta(1)-\rho)f(\eta)\,d\nu^n_\rho +  \frac {1}{2}\int c_n(1-\eta(1)-\rho)f(\eta^1) \frac{\nu_\rho^n(\eta^1)}{\nu_\rho^n(\eta)}\,d\nu^n_\rho\,,
\end{equation*}
see \eqref{theta_eta} to get the expression of $ \frac{\nu_\rho^n(\eta^1)}{\nu_\rho^n(\eta)}$.
A simple computation shows that the integral at the right hand side of last expression is equal to 
\begin{equation*}
- \frac {1}{2}\int c_n(\eta(1)-\rho)f(\eta^1)\,d\nu^n_\rho\,,
\end{equation*}
so that the display above is equal to 
\begin{equation*}
 \frac {1}{2}\int c_n(\eta(1)-\rho)(f(\eta)-f(\eta^1))\,d\nu^n_\rho. 
\end{equation*}
By Young's inequality we can bound the previous expression by
\begin{equation*}
B\int c_n^2(\eta(1)-\rho)^2d\nu^n_\rho+
 \frac {1}{4B}\int ({f(\eta)}-{f(\eta^1)})^2\,d\nu^n_\rho\,.
\end{equation*}
Now, remember the notation $\eta^1=\sigma^{0,1}\eta$ and multiply and divide by $r_{0,1}(\eta)$ the integrand function  inside the second integral above. We can do it, because that there exists $\tilde C_\rho$ such that  $\frac{\tilde C_\rho}{n^\theta}\leq r_{0,1}(\eta)\leq \frac{C_\rho}{n^\theta}$. Then we can bound the previous expression from above by
\begin{equation*}
\begin{split}
& B\int c_n^2(\eta(1)-\rho)^2\,d\nu^n_\rho+
 \frac {n^\theta}{4B\tilde C_\rho}\int r_{0,1}(\eta)\,({f(\sigma^{0,1}\eta)}-{f(\eta)})^2\,d\nu^n_\rho\,.\end{split}
\end{equation*}
Using \eqref{Dn} the second integral in the last expression is bounded from above by $\mathcal D_{n}({f},\nu_\rho^n)$.  Recalling \eqref{eee},  we get
\begin{equation*}
\begin{split}
\int c_n(\eta(1)-\rho)f(\eta)\,d\nu_\rho^n\leq
&\,B\,c_n^2\int (\eta(1)-\rho)^2\,d\nu^n_\rho+
 \frac {n^\theta}{2B\tilde C_\rho}\langle -\mathcal L^\theta_{n}{f},{f} \rangle_{\nu_\rho^n}\,.\end{split}
\end{equation*}
Putting this inequality  in \eqref{kv}  and 
choosing $B=n^{\theta-2}/2\tilde C_\rho$, the term at the right hand side of lthe last expression cancels with $n^2\langle \mathcal L_{n}^\theta{f},{f} \rangle_{\nu_\rho^n}$.
Therefore, the expectation appearing in the statement of the lemma is bounded from above by
\begin{equation*}
 \frac {c_n^2n^\theta}{2\tilde C_\rho n^2}\int (\eta(1)-\rho)^2\,d\nu^n_\rho\,.
\end{equation*}
Since  $\eta$ is bounded  the proof ends. 
 
\end{proof}

\section*{Acknowledgements}~~~
A. N. was supported through a grant ``L'OR\' EAL - ABC - UNESCO Para Mulheres na Ci\^encia''.
P. G. thanks  FCT/Portugal for support through the project 
UID/MAT/04459/2013. T. F. was supported by FAPESB through the project Jovem Cientista-9922/2015.
\bibliographystyle{amsplain}

\bibliography{bibliography}
\end{document}